\theoremstyle{plain}						
\newtheorem{theorem}{Theorem}[section]
\newtheorem{lemma}[theorem]{Lemma}
\numberwithin{equation}{section}
\titleformat{\section}[block]{\normalfont\bfseries}{\thesection.}{0.5em}{}
\titlespacing{\section}{0pc}{1pc}{1pc}
\titleformat{\subsection}[block]{\normalfont\bfseries}{\thesubsection.}{0.5em}{}
\titlespacing{\subsection}{0pc}{1pc}{1pc}
\begin{document}
 
\title{Microscopic and Macroscopic Traffic Flow Models\\ including Random Accidents}

\author{Simone G\"ottlich\footnotemark[1], \; Thomas Schillinger\footnotemark[1]}

\footnotetext[1]{University of Mannheim, Department of Mathematics, 68131 Mannheim, Germany (goettlich@uni-mannheim.de, schillinger@uni-mannheim.de)}

\date{ \today }
\maketitle

\begin{abstract}
\noindent
We introduce microscopic and macroscopic stochastic traffic models including traffic accidents. The microscopic model is based on a Follow-the-Leader approach whereas the macroscopic model is described by a scalar conservation law with space dependent flux function. Accidents are introduced as interruptions of a deterministic evolution and are directly linked to the traffic situation. Based on a Lax-Friedrichs discretization convergence of the microscopic model to the macroscopic model is shown. Numerical simulations are presented to compare the above models and show their convergence behaviour.
\end{abstract}

{\bf AMS Classification.} 35L65, 90B20, 65M06   	 

{\bf Keywords.} microscopic and macroscopic traffic flow models, random accidents, numerical convergence analysis, discretization schemes.


\section{Introduction}

Throughout the world, traffic accidents are a serious problem and causes considerable societal costs.  
So there is a great interest in understanding how accidents may happen and how they may be reduced. Mathematical models can help at least to analyze traffic scenarios and probably allow for a reliable prediction.  
In particular, there exist a variety of different mathematical approaches to model traffic accidents, for instance ordinary differential equations \cite{accidents6}, kinetic models \cite{kinMod}, conservation laws \cite{,accidents1.1,accidents1}, 
queuing theory \cite{accidents2}, 
and statistical methods \cite{accidents3,accidents5,accidents4}. 
Recently, some works have been published dealing with the question how to reduce the risk of an accident. Especially, by introducing autonomous vehicles it is suggested to reduce the variation in the vehicles velocities to decrease the likeliness of an accident \cite{add2,add1}.

For the detailed description of the traffic dynamics, we distinguish between two scales, i.e. microscopic and macroscopic. In a microscopic setting each vehicle is considered as a particle that moves on a road and adjusts its velocity according to the behaviour of other vehicles in front. Such models have been introduced by Pipes \cite{Pipes} in 1953. The dynamics are governed by ordinary differential equations (ODE) for each vehicle depending on the distance of the vehicle in front and therefore called Follow-the-Leader models \cite{FtL2,FtL1,FollowLeader,FtL3}. On the other hand, a macroscopic approach considers traffic as a density that floats on the road. These kind of traffic models were introduced by Lighthill-Whitham-Richards (LWR) in \cite{LWR,LWR2} and have been further extended e.g. in \cite{piccoli1, piccoli2,stochmac2, stochmac1}. 
Several works also covered the problem of convergence of the microscopic to the macroscopic model, for instance \cite{MicMac4, MicMac5, MicMac1, MicMac3, MicMac2, FollowLeader}. 

In this paper, we introduce two models based on deterministic traffic dynamics which face accidents as a stochastic disturbance. Similar ideas of a deterministic system that is interrupted by random events have been considered in production networks, e.g. \cite{RandomMachineBreakdowns3, RandomMachineBreakdowns1}. They have been also extended to a model with production dependent breakdowns \cite{15}. A similar model for pedestrians was developed in \cite{pedestrians} where individuals switch randomly between stop and go. Here, we adapt the idea of process dependent interruptions to a hierarchy of traffic models and investigate their relation applying a convergence analysis. Specifically, we focus on two types of accidents: First, we consider accidents due to high traffic flux which correspond to the idea that accidents are more likely having both, a high density and high velocity of the vehicle. Second, we investigate rear-end collisions which for example can be observed at tailbacks.

In contrast to other accident models, we consider a framework in which traffic and accidents are connected in a bi-directional relation. More precisely, for the microscopic model we add traffic accidents by adjusting the velocity function of the piecewise deterministic ODE-system in the area where an accident happens (characterized by an accident position and an accident size). On the other hand, the probability measures governing the accident evolution are determined by the vehicle positions. The macroscopic model under consideration was developed in \cite{A} and presents a deterministic dynamic that is interrupted by random accidents. The time and the position of an accident are modeled dependent on the traffic situations that are observed. An accident then influences the flux function and reduces the capacity of the system in the area of the accident. So the flux function may be space dependent, see for example \cite{B,spaceDep1,spaceDep2}. We also study the convergence in this framework. Concentrating on a microscopic submodel in which the stochastic influences are governed by the macroscopic model, we use Lagrangian variables and show their convergence to a related conservation law based on a Lax-Friedrichs discretization. An equivalence theorem then provides a limit for the conservation law whose weak solution is just the macroscopic traffic density in Eulerian variables.

This paper is organized as follows: In Section $\ref{sec:BasicModel}$ we introduce a microscopic traffic accident model based on a Follow-the-Leader approach and provide the conditional transition probabilities for a stochastic process. Afterwards in Section $\ref{sec:MakroModelll}$, we present the corresponding LWR type macroscopic traffic model. Using the macroscopic stochastic components regarding the accidents for a microscopic model we define a third model and investigate this micro-macro-limit in the Sections $\ref{TildeModel}$ and $\ref{sec:ConvergenceMikroMakro}$. In Section $\ref{numRes}$, we provide the numerical treatment for both, the microscopic and macroscopic model, and additionally present a numerical convergence analysis using different error measures.

\section{Traffic Accident Models}
In this section we present the microscopic and macroscopic traffic accident models and describe how accidents are can be incorporated.

\subsection{Microscopic Model}\label{sec:BasicModel}
We introduce a standard deterministic Follow-the-Leader microscopic traffic model (see e.g. \cite{FollowLeader}) and consider one lane of a one-way-street with a total number of $N \in \mathbb{N}$ cars. The road is modelled by an interval $[a,b] ,~ a<b$ on which we assume periodic boundary conditions. Let $x_i(t)$ denote the position of vehicle $i \in \lbrace 1,...,N \rbrace$ at time $t>0$. The development of the position of any vehicle with respect to the time is given by the following system of ordinary differential equations for $i=1,...,N-1$
\begin{align}
\begin{split}
\label{det_ode}
\dot{x}_i (t) &= v \bigg(\frac{L}{\Delta x_i (t)}\bigg),~~~ \dot{x}_N(t) = v \bigg(\frac{L}{x_1 - x_N + (b-a)}\bigg).
\end{split}
\end{align}
The parameter $L>0$ denotes the length of the cars and $\Delta x_i(t) = x_{i+1}(t) - x_i(t)$ the distance between the fronts of vehicle $i$ and $i+1$ at time $t$. Note that $\Delta x_i(t)$ is always at least $L$, such that the argument in the velocity function in $(\ref{det_ode})$ is smaller or equal to 1. We assume $v: \mathbb{R} \rightarrow \mathbb{R}$ to be a Lipschitz continuous function modeling the velocity. In the case of a Follow-the-Leader model a reasonable choice is $v(\rho) = 1- \rho$ for $\rho \in [0,1]$. Cars are not allowed to overtake each other such that they move in a fixed order. 

To adapt to different road capacities one may add a road capacity function $c_{road}: \mathbb{R} \rightarrow \mathbb{R}$ depending on the position of a vehicle. In our particular case this function will be uniformly bounded, piecewise constant and have at most finitely many jumps. Using this coefficient we may incorporate different speed limits at different locations on the road. Then for $i=1,...,N-1$ the equation $(\ref{det_ode})$ expands to
\begin{align}
\begin{split}
\label{odec1}
\dot{x}_i (t) &= c_{road}(x_i(t)) v \bigg(\frac{L}{\Delta x_i(t)}\bigg), ~~  \dot{x}_N(t) = c_{road}(x_N(t)) v \bigg(\frac{L}{x_1 - x_N + (b-a)}\bigg).
\end{split}
\end{align}

In the model, accidents consist of three components: The location, the size and the capacity reduction of an accident. The location represents the exact position where the accident happens and then starts to affect the system. The size is the parameter that models  to which length it affects the system of vehicles. Combining the accident location and the accident size we obtain an interval on which the accident has some influence. The extent of the accident, now considered in a fraction of how much of the road capacity is being reduced by the accident, is given in the last component, the capacity reduction. 

If there is more than only one accident, the leftover road capacities are multiplied to obtain the resulting road capacity. In the model an accident has the same impact on the road as an additional speed limit which also reduces the number of vehicles passing a certain position on the road.

To incorporate the accidents in the system of vehicle positions, we introduce an accident capacity function. Assume $M \in \mathbb{N}_0$ to be the number of accidents being active. We equip each accident with an index $j \in \lbrace 1,...,M \rbrace$ and use the vectors $p \in \mathbb{R}^M$ for the positions, $s \in \mathbb{R}_+^M$ for the sizes and $c \in [0,c_{max}]^M$ for the capacity reductions of each accident, for $c_{max} \in [0,1)$.  Then the accident capacity function is given by
\begin{equation*}
c_{ac}: \mathbb{R} \rightarrow \mathbb{R}, ~~ x \mapsto \prod_{j=1}^{M} \Big(1-c_j \mathbbm{1}_{\big[p_j - \frac{s_j}{2}, p_j + \frac{s_j}{2}\big]} (x)\Big).
\end{equation*}
The index of the accident capacity function is meant to contain all the necessary information about the current accident situation.
Then the system $(\ref{odec1})$ extends to
\begin{align}
\begin{split}
\label{odeAcc}
\dot{x}_i (t) &= c_{road}(x_i(t))c_{ac}(x_i(t)) v \bigg(\frac{L}{\Delta x_i}\bigg), ~~ i=1,...,N-1  \\
\dot{x}_N(t) &= c_{road}(x_N(t))c_{ac}(x_N(t)) v \bigg(\frac{L}{x_1 - x_N + (b-a)}\bigg).
\end{split}
\end{align}
To obtain a well defined model, we assume that for the initial vehicle positions it holds 
\begin{align}
\begin{split}
\label{ICMikro}
L &< |x_{i+1}(0) - x_i(0) |, ~~ L < |x_{1}(0) - x_N(0) + b-a | \\
a &\leq x_1 < ... < x_N < b , \text{   for } i=1,...,N-1.
\end{split}
\end{align}
This condition ensures that the distance between any two cars at the initial state is bounded from below by the vehicle length and that the cars are ordered according to their indices in a road segment with finite length.
 
In this model we do not actually want to have a vehicle crashing into the other, in terms that their position overlap. To ensure that our model is well defined in that sense and does not generate collisions, we require the following condition on the time grid $(t_j)_{j \in \mathbb{N}}$: Let for any vehicle $i=1,...,N-1$, and any time with $t_{j+1} = t_j + \Delta t$ hold true
\begin{equation*}
x_{i+1}(t_{j+1}) - x_i(t_{j+1}) - L \geq 0, ~~~ x_{1}(t_{j+1}) - x_N(t_{j+1}) - L + (b-a) \geq 0
\end{equation*}
assuming that $x_{i+1}(t_{j}) - x_i(t_{j}) - L \geq 0$. This condition makes sure that the difference between the front of two vehicles is at least the vehicle length, such that the rear vehicle does not cause a collision. The inequality can be achieved by setting the step size $\Delta t$ small enough. We use an Euler approximation to derive a condition for $\Delta t$:
\begin{align*}
x_{i+1}(t_{j+1}) - x_i(t_{j+1}) - L =&~ x_{i+1}(t_j) + \Delta t c(x_{i+1}(t_j))\bigg(1 - \frac{L}{x_{i+2}(t_j) - x_{i+1}(t_j)}\bigg)\\
&- \bigg(x_{i}(t_j) + \Delta t c(x_{i}(t_j)) \bigg(1 - \frac{L}{x_{i+1}(t_j) - x_{i}(t_j)}\bigg)\bigg) - L \\
\geq&~ x_{i+1}(t_j) - x_i(t_j) - \Delta t v_{max} \bigg(1 - \frac{L}{x_{i+1}(t_j) - x_{i}(t_j)}\bigg) - L \\
 \geq&~ \underbrace{(x_{i+1}(t_j) - x_{i}(t_j) - L)}_{\substack{\geq 0}} \bigg(1 - \frac{\Delta t v_{max}}{L} \bigg).
\end{align*}
The first factor is non-negative per assumption. Thus, the whole product remains positive if
\begin{equation}
\label{diskreteCFL}
\Big(1 - \frac{\Delta t v_{max}}{L} \Big) \geq 0~ \Leftrightarrow ~ \Delta t \leq \frac{L}{v_{max}}.
\end{equation}
Nevertheless, our model will capture accidents, but they are rather generated artificially by some probability distributions that model the likeliness of an accident. 
We assume that there are basically two types of accidents. The first type consists of accidents due to both, high traffic density and high velocity of the vehicles. For the second type one may also think of a higher likeliness of accidents when a driver faces an increase of the traffic density (e.g. at the end of a traffic jam). Such a rear-end collision at the end of a traffic jam often is caused by drivers misjudging the distance and speed of the vehicles in front of them. 

To model the first type of accidents we note that in general at a higher speed of the cars an accident is more likely. Additionally, a higher density of cars leads to a higher likelihood of an accident. Thus, it is reasonable to use the product of both to define a measure for the probability of an accident given a time and a range in space. This choice also reflects that if we have only one vehicle on our road (density of 0) and even the speed is $v_{max}$, we do not expect any accidents of this type. The same effect applies to the situation where we have a maximum density, and we face a bumper-to-bumper situation (i.e. cars have no buffer between them). Then, the velocity is zero and thus an accident will not be possible. 

To be more precise about the measure for the accident location we introduce some definitions.
For each vehicle $i=1,...,N-1$ we define a local density by
\begin{equation*}
\label{locdensity}
 \rho_i (t) = \frac{L}{x_{i+1}(t) - x_i(t)}, ~~~ \rho_N (t) = \frac{L}{x_{1}(t) - x_N(t) + (b-a)}.
\end{equation*}
Taking into account periodic boundary conditions, we define a piecewise constant function $h_{ac}: \mathbb{R} \times \mathbb{R}_+ \rightarrow \mathbb{R}$ for $i=1,...,N-1$ by
\begin{equation*}
h_{ac}(x,t) =      
\begin{cases}
    c_{road}(x_i(t))c_{ac}(x_i(t)) \rho_i (t) v(\rho_i(t)) &,~ x_i(t)\leq x < x_{i+1}(t) \\
    c_{road}(x_N(t))c_{ac}(x_N(t)) \rho_N (t) v(\rho_N(t)) &,~ x \in [x_N(t), b] \cup [a,x_1(t)) \\
	0 &    \text{, else.}
    \end{cases} 
\end{equation*}
A time and accident situation dependent constant is defined by 
\begin{align}
\begin{split}
\label{C_F_const}
C_{F_{ac}} (t) =& \int_\mathbb{R} h_{ac}(x,t) dx \\
=& \sum_{i=1}^{N-1} c_{road}(x_i(t))c_{ac}(x_i(t)) \rho_i (t) v(\rho_i(t)) (x_{i+1}(t) - x_{i}(t))\\
 &+ c_{road}(x_N(t))c_{ac}(x_N(t)) \rho_N (t) v(\rho_N(t)) (x_{1}(t) - x_{N}(t)+(b-a)).
\end{split}
\end{align}
Here, $C_{F_{ac}} (t)$ is finite if $(\ref{ICMikro})$ holds because the road capacities, local densities and velocities are finite and the sum of the vehicle distances can at most amount to the length of the road.

Using $(\ref{C_F_const})$, we construct a family of probability measures $(\mu_{t}^{F_{ac}})_{t \geq 0}$ on $(\mathbb{R}, \mathcal{B}(\mathbb{R}))$ for the position of a \textbf{accident of type 1} at time $t$ and a given accident situation by
\begin{equation}
\label{muFMik}
\mu_{t}^{F_{ac}} (B) = \frac{\int_B h_{ac}(x,t) dx}{C_{F_{ac}}(t)}
\end{equation}
for $B \in \mathcal{B}(\mathbb{R})$, the Borel $\sigma$-algebra over $\mathbb{R}$.

A probability measure can also be defined for \textbf{accident type 2}. This should be reflecting situations where accidents occur at the tail of a traffic jam or any other increase in the traffic density. Mathematically, this can be expressed by considering the difference of the local densities of two consecutive vehicles. If such a difference is positive we observe an increase in the local densities which corresponds to denser traffic in front of a vehicle. Denser traffic also corresponds to vehicles with lower speed and thus evokes the risk of a rear-end collision.
Therefore, we set
\begin{equation}
\label{Drplus}
D \rho_{+} (t) = \sum_{i=1}^{N-1} (\rho_{i+1} (t) - \rho_i (t))_{+} + (\rho_{1} (t) - \rho_N (t))_{+},
\end{equation}
where $(x)_{+} = \max \lbrace x,0 \rbrace$. 
A family of probability measures $(\mu_t^D)_{t\geq 0}$ on  $(\mathbb{R},\mathcal{B}(\mathbb{R}))$ for the position of an accident type 2 can then be constructed by
\begin{equation}
\mu_t^D (B) = \frac{\sum_{i=1}^{N-1} \varepsilon_{x_i(t)} (B) (\rho_{i+1} (t) - \rho_i (t))_{+} + \varepsilon_{x_N(t)} (B) (\rho_{1} (t) - \rho_N (t))_{+}}{D \rho_{+} (t)}
\end{equation}
for $B \in \mathcal{B}(\mathbb{R})$ and $\varepsilon_x(B)$ being the Dirac measure in $x$ for $B$. 

To combine both accident types, we choose a parameter $\beta \in [0,1]$ which describes the share of accidents of type 1. Accordingly, accidents of type 2 occur with share $(1-\beta )$. The family of probability measures $(\mu_{t,ac}^{pos})_{t\geq 0}$ on $(\mathbb{R}, \mathcal{B}(\mathbb{R}))$ reflecting the position of an accident is given by
\begin{equation}
\label{mupos}
\mu_{t,ac}^{pos} (B) = \beta \mu_{t}^{F_{ac}} (B) + (1- \beta) \mu_t^D (B)
\end{equation}
for $B \in \mathcal{B}(\mathbb{R})$.

As the accident avoidance plays an important role, we remark that for accidents of type 2, the accident risk can be decreased if the distances between the individual vehicles are of similar length. This reduces large oscillations in the local density functions and therefore decreases $D\rho_+$. Such an effect can be achieved by aligning the vehicle velocities. On the other hand accidents of type 1 can be considered as a background accident noise which is not directly influenced by certain driver behaviours. Therefore, an accident prevention strategy is not that obvious for this type of accidents.

So far only the probability distribution for the position of an accident has been investigated. To model an accident properly, we also assign a size and a road capacity reduction. We introduce the probability measures $\mu^{s}$ and $\mu^{cap}$ on $(\mathbb{R}, \mathcal{B}(\mathbb{R}))$ for the size and the capacity reduction of the accident respectively. For instance for both measures one could choose a uniform distribution on some appropriate interval. 

In a next step we define a time discrete stochastic process that models the whole accident traffic model. Therefore, let us now consider an equidistant time grid $({t_n})_{n \in \mathbb{N}}$ with step size $\Delta t >0$. We introduce a stochastic process $X = (X_n, n \in \mathbb{N})$ in a state space $E^{(N,K)}$, where 
\begin{align*}
X_n &= (x^n, M^n, p^n, s^n, c^n, u^n, l^n) \in E^{(N,K)}, \\
E^{(N,K)}&= \mathbb{R}^{N} \times \mathbb{N}_0 \times \mathbb{R}^{K} \times \mathbb{R}^{K} \times \mathbb{R}^{K} \times \lbrace -1,0,1 \rbrace \times \mathbb{N}
\end{align*}
on some probability space $(\Omega, \mathcal{A}, P)$. Set $K \in \mathbb{N}$ large enough to capture all accidents that may happen. As before, $N \in \mathbb{N}$ determines the total number of vehicles.

We denote for any time $t_n$, $x_i^n$ the position of vehicle $i$, $M^n$ the total number of accidents at $t_n$, $p_j^n$ the position of accident $j$, $s_j^n$ the size of accident $j$ and $c_j^n$ the capacity reduction of accident $j$. Let $u^n$ be the component that indicates the kind of event that happens. We assign $u^n=-1$ in the case that an accident dissolves, whereas $u^n=1$ indicates when a new accident is going to occur and $u^n=0$ in the case neither of these happen in $t_n$. The component $l^n$ is an additional parameter for the accident index that is going to dissolve, in case an accident dissolves at $t_n$.

To determine when and which event occurs, we use rate functions that describe the likeliness of an event given the current traffic situation. The ideas for these rate function are based on concepts introduced in \cite{A}. We choose an appropriate rate function $\psi : E \times \mathbb{R}_+ \rightarrow \mathbb{R}_+$.
The rate depends on two given parameters $\lambda^F>0$ and $\lambda^D>0$ for the likeliness of an accident due to high flux or ends of tailbacks, respectively. They both may depend on $\beta$, i.e. $\lambda^F = \beta k_1$ and $\lambda^D = (1- \beta) k_2$ with two constants $k_1, k_2 \in \mathbb{R}_+$. 

Additionally, the rate function takes the two constants $C_{F_{ac}}$ defined as in $(\ref{C_F_const})$ and $D \rho_+$ defined as in $(\ref{Drplus})$ into account. The larger $C_{F_{ac}}$ the higher the risk of an accident of type 1. The larger $D \rho_+$ the more increases in the local density functions can be observed (which more or less correspond to ends of traffic jams) and thus increases the general risk of an accident of type 2. The rate function for a new accident is given by the function $\lambda^A: E \times \mathbb{R}_+ \rightarrow \mathbb{R}$ and
\begin{equation*}
\label{ratefct}
\lambda^A (y,t) = \lambda^F  C_{F_{ac}}(t) + \lambda^D  D \rho_+(t).
\end{equation*} 
The accident situation is now incorporated into $C_{F_{ac}}$ via information out of the stochastic process. 
We extend $\lambda^A$ by introducing a rate $\lambda^R >0$ for the dissolution of an accident weighted by the number of accidents in
\begin{equation}
\label{ratefct2}
\psi (y,t) = \lambda^F C_{F_{ac}}(t) + \lambda^D  D \rho_+(t) + \lambda^R M(t). 
\end{equation}
This enables us to define the transition probabilities of the stochastic process. Let $\Delta t >0$ satisfy $(\ref{diskreteCFL})$. 
The probabilities for the family $(u^n)_{n \in \mathbb{N}}$ describing the kind of event which is going to happen is given by $(\ref{probu})$ - $(\ref{lastprob})$ using the rate function introduced in $(\ref{ratefct2})$
\begin{alignat}{2}
&P(u^{n+1} = 0 \mid X^n) &&= 1 - \Delta t \psi(X^n) \label{probu}\\
&P(u^{n+1} = 1 \mid X^n) &&=  \Delta t \psi(X^n) \frac{\lambda^A}{\lambda^A + M^n \lambda^R} \\
&P(u^{n+1} = -1 \mid X^n) &&=  \Delta t \psi(X^n) \frac{M^n \lambda^R}{\lambda^A + M^n \lambda^R}. \label{lastprob}
\end{alignat}
Using the family of parameters $(u^n)_{n \in \mathbb{N}}$ we can directly specify the conditioned transition probabilities for the number of accidents in a next time step
\begin{alignat}{2}
&P(M^{n+1} = M^n \mid X^n) &&= \mathbbm{1}_0(u^n)\\
&P(M^{n+1} = M^n+1 \mid X^n) &&= \mathbbm{1}_1(u^n)\\
&P(M^{n+1} = M^n-1 \mid X^n) &&= \mathbbm{1}_{-1}(u^n).
\end{alignat}
After having chosen $u^n$ there obviously is no uncertainty in the development of the evolution of the number of accidents.
Another family of random variables $(l^n)_{n \in \mathbb{N}}$ is used to determine which accident is going to be dissolved at time $t_n$ if this case occurs. As we want to ensure that there is no underlying pattern in the dissolving accidents we uniformly pick an index of the currently active accidents in $(\ref{probL})$ 
\begin{align}
&P(l^{n+1} = k \mid X^n) = \mathbbm{1}_{[0,M^n]}(k) \frac{1}{M^n}.    \label{probL} 
\end{align}
The equations $(\ref{x_1})$ and $(\ref{x_2})$ correspond to Euler approximations of the system of ordinary differential equations in $(\ref{odeAcc})$ and model the position of each vehicle 
\begin{alignat}{1}
&P\Big(x_i^{n+1} = x_i^n + \Delta t c_{road}(x_i^n)  c_{ac}(x_i^n)  v \bigg( \frac{L}{\Delta x_i^n} \bigg)\mid X^n\Big) = 1,~~ i=1,...,N-1 \label{x_1} \\
&P\Big(x_N^{n+1} = x_N^n + \Delta t c_{road}(x_N^n)  c_{ac}(x_N^n)  v \bigg( \frac{L(N)}{x_{1}^n-x_N^n + (b-a)} \bigg)\mid X^n \Big) = 1.\label{x_2}
\end{alignat}
As a next step we consider the transition probabilities of the three accident parameters
\begin{alignat}{2}
&P(p_k^{n+1} = p_k^n \mid X^n) &&= \mathbbm{1}_0(u^{n}) + \mathbbm{1}_1(u^{n}) (1-\mathbbm{1}_{M^n +1}(k)) +\mathbbm{1}_{-1} (u^{n}) (1 - \mathbbm{1}_{l^n}(k)) \label{p_1}\\
&P(p_k^{n+1} = 0 \mid X^n) &&= \mathbbm{1}_{-1}(u^{n}) \mathbbm{1}_{l^n}(k)  \label{p_2} \\
&P(p_k^{n+1} \in B_1 \mid X^n) &&= \mathbbm{1}_{1}(u^{n}) \mathbbm{1}_{M^n+1} (k) \mu_{t_n,a}^{pos}(B_1) \label{p_3} \\
&P(s_k^{n+1} = s_k^n \mid X^n) &&= \mathbbm{1}_0(u^{n}) + \mathbbm{1}_1(u^{n}) (1-\mathbbm{1}_{M^n +1}(k)) + \mathbbm{1}_{-1} (u^{n}) (1 - \mathbbm{1}_{l^n}(k)) \label{s_1}\\
&P(s_k^{n+1} = 0 \mid X^n) &&= \mathbbm{1}_{-1}(u^{n}) \mathbbm{1}_{l^n}(k)  \label{s_2} \\
&P(s_k^{n+1} \in B_2 \mid X^n) &&= \mathbbm{1}_{1}(u^{n}) \mathbbm{1}_{M^n+1} (k) \mu^{s}(B_2) \label{s_3} 
\end{alignat}
\begin{alignat}{2}
&P(c_k^{n+1} = c_k^n \mid X^n) &&= \mathbbm{1}_0(u^{n}) + \mathbbm{1}_1(u^{n}) (1-\mathbbm{1}_{M^n+1}(k)) + \mathbbm{1}_{-1} (u^{n}) (1 - \mathbbm{1}_{l^n}(k)) \label{c_1} \\
&P(c_k^{n+1} = 0 \mid X^n) &&= \mathbbm{1}_{-1}(u^{n}) \mathbbm{1}_{l^n}(k)   \label{c_2} \\
&P(c_k^{n+1} \in B_3 \mid X^n) &&= \mathbbm{1}_{1}(u^{n}) \mathbbm{1}_{M^n+1} (k) \mu^{cap}(B_3) \label{c_3}
\end{alignat}
for all $k = 1,...,K$ and $B_1, B_2, B_3 \in \mathcal{B}(\mathbb{R})$.  
In the equations $(\ref{p_1})$ - $(\ref{p_3})$ the transition of the position of the accidents from time step $n$ to $n+1$ is modeled. The first case $(\ref{p_1})$ is the steady state where either no event occurs ($u^n=0$) or the considered line does not correspond to the one in which a new accident would be denoted ($u^n=1$) or an old one dissolved ($u^n = -1$). The dissolution of an accident in a certain line is represented in the second expression $(\ref{p_2})$ whereas the position of a new accident is introduced in the third equation $(\ref{p_3})$. In the case where an accident dissolves we check for the variables $u^n$ (which event occurs) and $l^n$ (which accident potentially dissolves). In the latter we used the probability measure $\mu_{t_n,ac}^{pos}$ from $(\ref{mupos})$ in the $(n+1)$st accident entry to determine a new accident's position.

The treatment of the accident size variables $(\ref{s_1})$ - $(\ref{s_3})$ follows exactly the same pattern as with the accident positions. The same applies to the capacity reduction in $(\ref{c_1})$ - $(\ref{c_3})$. 

\subsection{Macroscopic Model}\label{sec:MakroModelll}
In contrast to the microscopic model, in the macroscopic model we do not describe the behavior of each vehicle. The variable of interest is the traffic density function and the dynamics are based on the LWR model (e.g. \cite{LWR}).
Basically the macroscopic model can be described by a conservation law with the following space dependent flux
\begin{equation}
\label{Flux_macro}
F_{ac}(x, \rho) = c_{ac}(x) c_{road} (x) f(\rho),
\end{equation}
where $f: [0,1] \rightarrow [0,\infty)$ is a twice continuously differentiable LWR type function  (i.e. $f(\rho)= \rho v(\rho)$ with $v(\rho) = 1-\rho$) that satisfies:
\begin{equation}
\label{condonf}
f(0) = 0 = f(1),~~~~  f'' \leq c < 0,  \text{ for some } c \leq 0,~~~~ \exists !  ~ \rho^* \in (0,1) \text{ s.t. } f'(\rho^*) = 0.
\end{equation}
Similarly to the microscopic model the function $c_{road}: \mathbb{R} \rightarrow \mathbb{R}_+$ represents the general road capacity. We use the function $c_{ac}: \mathbb{R} \rightarrow \mathbb{R}_+ $ to model the accidents in the traffic model. The index $ac$ provides information about the current active accidents. In the case of several accidents, say $M \in \mathbb{N}_0$, we assign each of them an index $j \in \lbrace 1,..., M \rbrace$. For accident $j$ we denote the position $p_j \in \mathbb{R}$, the size $s_j \in \mathbb{R}_+$ and the capacity reduction by $c_j \in [0,c_{max}]$, where  $c_{max} \in [0,1)$.  Then $c_{ac}$ is given by
\begin{equation*}
c_{ac} (x) = \prod_{j=1}^M \Big(1 - c_j \mathbbm{1}_{[p_j - \frac{s_j}{2}, p_j + \frac{s_j}{2}]} (x)\Big).
\end{equation*}
All together we end up with the following Cauchy problem
\begin{align}
\begin{split}
\label{makroProblem}
\rho_t + (F_{ac}(x, \rho))_x &= 0, ~~\rho(x,0) = \rho_0 (x),
\end{split}
\end{align}
where $\rho_0$ describes the initial density on the road. 
Under some assumptions on these functions we obtain a unique entropy solution to the Cauchy problem $(\ref{makroProblem})$ in the sense of a function in $BV(\mathbb{R})$. As it turns out we achieve this entropy solution if we demand that 
\begin{itemize}
\item $f$ is a LWR flux, i.e. it fulfills the conditions $(\ref{condonf})$ and $f, f' \in L^{\infty}(\mathbb{R})$
\item $\rho_0 \in BV(\mathbb{R})$, ~ $c_{ac}(x) c_{road}(x) \in C^2(\mathbb{R}) \cap TV(\mathbb{R}) \cap L^{\infty}(\mathbb{R})$
\item $(c_{ac}(x) c_{road}(x))' \in L^{\infty}(\mathbb{R}) \cap L^1(\mathbb{R}), ~(c_{ac}(x) c_{road}(x))'' \in L^1(\mathbb{R})$
\end{itemize}
For a proof we refer to \cite{A}. 

This section is proceeded quite similarly to what was done for the microscopic model. First we are interested in where an accident is going to occur. We consider two different scenarios for an accident. Accidents according to type 1 are caused by a high value of the flux $(\ref{Flux_macro})$. Therefore, we define a time and accident dependent constant
\begin{equation*}
C_{F_{ac}}^{Mac}(t) = \int_a^b F_{ac}(x,\rho (x,t)) dx,
\end{equation*}
where $a<b$ and are also allowed to be $-\infty$ and $\infty$. 
The integral is well defined if $F_{ac}$ fulfills the conditions for the existence of an entropy solution since
\begin{align*}
|C_{F_{ac}}^{Mac}(t)| &\leq \int_a^b | F_{ac}(x,\rho (x,t)) | dx \leq \| c_{road} \|_{\infty} \| v \|_{\infty} \int_a^b \rho(x,t) dx.
\end{align*}
For any $B \in \mathcal{B}([a,b])$ and $\rho \in BV([a,b] \times \mathbb{R}_+)$, this allows to define the following family of probability measures $(\mu_t^{F_{ac},Mac})_{t\geq 0}$ on $([a,b],\mathcal{B}([a,b]))$ by
\begin{equation}
\label{muFMak}
\mu_t^{F_{ac},Mac} (B) =  \frac{\int_B F_{ac}(x,\rho(x,t))dx}{C_{F_{ac}}^{Mac}(t)}.
\end{equation}
The latter measure models the probability of having an accident of type 1 in the given Borel set. 

Accidents of type 2 were modelled due to tailbacks of traffic jams in the microscopic model using increases in the local density functions. The corresponding way to model this in a macroscopic model, would be by considering the derivative of the traffic density function. Since we have to work with weak solutions and functions of bounded variation (BV) it is not possible to use classical derivatives. Fortunately, as shown for example in \cite{BVfct}, derivatives of BV functions correspond to signed Radon measures $D \rho$  given by the total variation $TV_\rho$ on $B \in \mathcal{B}([a,b])$
\begin{equation*}  
|D \rho|^{Mac}(B,t) = \sup \bigg\lbrace \int_B \rho(x,t) \phi'(x) dx \mid \phi \in C_c^1(B), |\phi| \leq 1 \bigg\rbrace.
\end{equation*}
We can split this measure up into a positive and a negative part using the Hahn decomposition (see e.g. \cite{BVfct}) constituting an increase or a decrease in the density respectively by
\begin{equation*}
|D \rho|^{Mac}(B,t) = D \rho^{+,Mac} (B,t) + D \rho^{-,Mac} (B,t).
\end{equation*}
We are particularly interested in the positive part since we want to detect tailbacks of traffic jams that can be characterized by increasing values of the flux. This leads us to the following family of probability measures $(\mu_{t}^{D, Mac})_{t \geq 0}$ on $([a,b], \mathcal{B}([a,b]))$
\begin{equation}
\mu_{t}^{D, Mac} (B) = \frac{D \rho^{+,Mac} (B,t)}{D \rho^{+,Mac}([a,b],t)}
\end{equation} 
for the probability of an accident of type 2 in $B \in \mathcal{B}([a,b])$. \\
If we assume that accidents of type 1 and type 2 occur with a probability of $\beta^{Mac} \in [0,1]$ and $1-\beta^{Mac}$ respectively, we end up with the following overall family of probability measures $(\mu_{t,{ac}}^{pos,Mac})_{t\geq 0}$ on $([a,b], \mathcal{B}([a,b]))$ for the position of an accident
\begin{equation}
\mu_{t,{ac}}^{pos,Mac} (B) = \beta^{Mac} \mu_t^{F_{ac},Mac} (B) + (1- \beta^{Mac}) \mu_t^{D,Mac} (B)
\end{equation}
for $B \in \mathcal{B}([a,b])$.

After having investigated the position of an accident, we will now care about whether an accident happens. Therefore, we consider a stochastic process containing all necessary information of our system. As in \cite{A} we define a state space for the stochastic process
\begin{equation*}
E^{Mac} = \mathbb{R}^{\mathbb{N}}  \times \mathbb{R}^{\mathbb{N}} \times [0,1]^{\mathbb{N}} \times BV(\mathbb{R}),
\end{equation*}
and set $\mathcal{E}^{Mac} = \sigma(E^{Mac})$ the smallest $\sigma$-algebra over $E^{Mac}$. Here, $E^{Mac}$ models the whole state of the accident model and consists of variables $E \ni  z = (p,s,c,\rho)$. The position of accident $j$ is denoted in $p_j$, whereas the size and capacity reduction are expressed by $s_j$ and $c_j$. The traffic density $\rho$ denotes the weak unique entropy solution of $(\ref{makroProblem})$.

We set $\mu^{s,Mac}$ and $\mu^{cap,Mac}$ as two probability measures on $(\mathbb{R},\mathcal{B}(\mathbb{R}))$ modeling the sizes and capacity reductions of accidents, respectively.

Furthermore, let $\lambda_R^{Mac}>0$ be the rate of dissolving an accident, $\lambda_F^{Mac}>0$ the rate of an accident type 1 and $\lambda_D^{Mac}>0$ the rate of an accident type 2. Then we define the overall rate for the occurrence of an accident for a given state $z \in E^{Mac}$ as
\begin{equation*}
\lambda_A^{Mac}(z,t) = \lambda_F^{Mac} C_{F_{ac}}^{Mac}(t) + \lambda_D^{Mac} D\rho^+(\mathbb{R},t). 
\end{equation*}
Also allowing for the dissolution of an accident the rate of an event (new accident or dissolution of an accident) for a given state $z \in E^{Mac}$ is defined by
\begin{equation*}
\psi(z,t) = \lambda_F^{Mac} C_{F_{ac}}^{Mac}(t) + \lambda_D^{Mac} D\rho^+(\mathbb{R},t) + \lambda_R^{Mac} M(t), 
\end{equation*}
where $M(t) \in \mathbb{N}$ describes the number of currently active accidents in $z$. Again $ac$ is dependent on the state of the process.
To include and exclude accidents in our model, we define two functions
\begin{align*}
& m(c) = \min \lbrace i \in \mathbb{N} \mid c_i=0 \rbrace \\
&\pi_i (u,v) = (v_1,...,v_{i-1},u,v_{i+1},...) \in \mathbb{R}^{\mathbb{N}}.
\end{align*}
For $z \in E^{Mac}$ and $B \in \mathcal{E}^{Mac}$ the transition probability of moving from state $z$ into any state of $B$ can be given as
\begin{align*}
\eta(z,B) =& \frac{1}{\lambda_R^{Mac} \sum_{i \in \mathbb{N}} \mathbbm{1}_{\lbrace c_i>0 \rbrace} + \lambda_A^{Mac}} \Big(\lambda_R^{Mac} \sum_{i \in \mathbb{N}} \mathbbm{1}_{\lbrace c_i>0 \rbrace} ~ \epsilon_{(p,s,\pi_i(0,c),\rho)}(B) \\
&+ \lambda_A^{Mac} \int_{\mathbb{R}^2 \times [0,1)} \epsilon_{(\pi_{m(c)} (\tilde{p},p),\pi_{m(c)} (\tilde{s},s),\pi_m{(c)}(\tilde{c},c),\rho)} (B)\\
& d(\mu^{pos,Mac} \otimes \mu^{s,Mac} \otimes \mu^{cap, Mac})(\tilde{p}, \tilde{s}, \tilde{c}) \Big).
\end{align*}
Between the stochastic jumps given by new events, the system evolves in a deterministic way. We denote the deterministic evolution of the system as
\begin{equation*}
\phi_t(p_0,s_0,c_0, \rho_0) = (p_0,s_0,c_0, \rho(t)),
\end{equation*}
where $\rho$ is the unique entropy solution to $(\ref{makroProblem})$ and $p_0,~s_0,~c_0$ and $\rho_0$ are given initial data.
To construct the stochastic process, we now use the idea of a thinning algorithm similarly applied as in  \cite{15} to construct a sequence of event times $(T_n)_{n\in \mathbb{N}}$. For $t_n \in [0,T]$ and $z_n \in E^{Mac}$ we then have
\begin{align*}
\begin{split}
P(T_{n+1} \leq t) &= 1 - e^{- \int_{t_n}^t \psi(\phi_{\tau-t_n}(z_n),\tau)d\tau}\\
P(Z_{n+1} \in B | T_{n+1} = t) &= \eta(\phi_{t-t_n}(z_n), B)
\end{split}
\end{align*}
for $t\geq t_n$ and $B \in \mathcal{E}^{Mac}$. 

Let us now define the stochastic process $X =(X(t))_{t\in [0,T]}$ having values in $E^{Mac}$ as
\begin{equation*}
X(t) = Z_n \text{,   for } t \in [T_n, T_{n+1}),
\end{equation*}
where $(T_n)_{n \in \mathbb{N}}$ and $(Z_n)_{n \in \mathbb{N}}$ are a sequence of event times and of states generated by the transition measure $\eta$, respectively.

\subsection{A Microscopic Model with Macroscopic Accidents} \label{TildeModel}
In the following sections we will be interested in whether we obtain some convergence of the microscopic model against the macroscopic model if we increase the number of vehicles on the road. To show such a convergence, we create another third model in between, which still will be of a microscopic nature but have accidents according to the corresponding macroscopic model. This allows for a separation of the proof. One step will be to show convergence of the two microscopic models, where we have to treat the randomness of the occurrence of accidents. In the second part of the proof any stochastic components can be left out, and we only have to care about the micro-macro limit of the microscopic model with macroscopic accidents and the macroscopic model (see Section $\ref{sec:ConvergenceMikroMakro}$). All variables concerned with this third model will be marked with a tilde symbol. 

Therefore, let us define a state space for the third stochastic process using the state space $E^{Mac}$ from the macroscopic model 
\begin{equation*}
\tilde{E}^{N} = E^{Mac} \times \mathbb{R}^{N},
\end{equation*}
depending on the total number of vehicles in the system.
We denote $\tilde{E}^N \ni \tilde{y} = (y, \tilde{x})$ where $y$ is taken from the macroscopic state space and $\tilde{x}$ is the vector of the positions of all $N$ cars, where the $i$-th entry represents the position of vehicle $i$.
Let us define a discrete time stochastic process $\tilde{X} = (\tilde{X}_n)_{n \in \mathbb{N}}$ on a time grid $(t_n)_{n \in \mathbb{N}}$ satisfying $\Delta t = t_{j+1} - t_j$, $j \in \mathbb{N}$,  with
\begin{equation*}
\tilde{X}_n = (X^{Mac} (t_n), \tilde{x}^n).
\end{equation*} 
For the vehicle position $\tilde{x}$ we consider similar equations as in Section $\ref{sec:BasicModel}$ but now using the accidents from the macroscopic model. To be specific the system of ordinary differential equations for the positions of the cars are now given by
\begin{align}
\begin{split}
\label{AusgangsODE}
\dot{\tilde{x}}_i(t) &= c_{road}(\tilde{x}_i(t)) \tilde{c}_{ac}(\tilde{x}_i(t)) v\bigg(\frac{L}{\tilde{x}_{i+1}(t)-\tilde{x}_i(t)}\bigg), ~~~ i=1,...,N-1  \\
\dot{\tilde{x}}_N(t) &= c_{road}(\tilde{x}_N(t)) \tilde{c}_{ac}(\tilde{x}_N(t)) v\bigg(\frac{L}{\tilde{x}_1 - \tilde{x}_N + (b-a)}\bigg).
\end{split}
\end{align}
Assume that $M$ denotes the number of currently active accidents in the macroscopic model. Then the function that regulates the capacity reductions due to accidents is given by
\begin{equation*}
\tilde{c}_{ac}: \mathbb{R} \rightarrow \mathbb{R}, ~~~ x \mapsto \prod_{j=1}^{M} \Big( 1-c_j^{Mac} \mathbbm{1}_{\big[p_j^{Mac}-\frac{s^{Mac}_j}{2}, p_j^{Mac}+\frac{s^{Mac}_j}{2}\big]} (x) \Big),
\end{equation*}
where the accident parameters $p_j^{Mac}, s_j^{Mac}, c_j^{Mac}$ for the position, size and capacity reduction are the ones taken from the macroscopic model for accident $j$. Instead of mentioning $p_j^{Mac}$, $s_j^{Mac}$ and $c_j^{Mac}$, we use the notation of an index $ac$ that represents the accident situation.
The transition probabilities for the vehicle positions $\tilde{x}$ for $i=1,...,N-1$ on the given time grid are computed by
\begin{alignat}{1}
\begin{split}
\label{x_1Tilde}
&P\Big(\tilde{x}_i^{n+1} = \tilde{x}_i^n + \Delta t c_{road}(\tilde{x}_i^n)  \tilde{c}_{ac}(\tilde{x}_i^n)  v \bigg( \frac{L(N)}{\tilde{x}_{i+1}(t)-\tilde{x}_i(t)} \bigg) \mid \tilde{X}^n \Big) = 1  \\
&P\Big(\tilde{x}_N^{n+1} = \tilde{x}_N^n + \Delta t c_{road}(\tilde{x}_N^n)  \tilde{c_{ac}}(\tilde{x}_N^n)  v \bigg( \frac{L(N)}{\tilde{x}_{1}(t)-\tilde{x}_N(t) + (b-a)} \bigg) \mid \tilde{X}^n\Big) = 1. 
\end{split}
\end{alignat}
All other transition conditions are not relevant for our microscopic model with macroscopic accidents since the accident situation is fully governed by the corresponding macroscopic model.

\section{Convergence Analysis}\label{sec:ConvergenceMikroMakro}
In this section, we analytically investigate the behavior of the local densities from the microscopic model with macroscopic accidents if we increase the number of vehicles, i.e. $N \rightarrow \infty$. Especially, we want to compare this limit to the traffic density function from the macroscopic model. Several approaches to this micro-macro limit have been considered (e.g. \cite{MicMac1, MicMac3,FollowLeader,MicMac2}). Most of them admit only for flux functions that do not depend on an additional spatial component. Therefore, we use ideas from \cite{FollowLeader} and proceed as follows:

First, we carry out a coordinate transform to Lagrangian coordinates and set up the definition of local inverse densities. These are required because, unfortunately, we are not directly able to show convergence for the local densities. For those local inverse densities, we set up a Cauchy problem which will be related to the Cauchy problem from the macroscopic model. Afterwards, we show that the local inverse densities converge to a weak solution of the related Cauchy problem. In the end, one can prove a correspondence between the weak solutions of these two Cauchy problems.

\subsection{Lagrangian Coordinates and Derivation of a Lax-Friedrichs Scheme}
So far, the microscopic model was considered in Eulerian coordinates in $(\ref{odeAcc})$. We now introduce the connected Lagrangian setting. In Eulerian coordinates we assigned a unique index to each vehicle on the road. Depending on the time we denoted the position of each vehicle by the variable $x_i(t)$. 

Another way to consider the setting would be by taking $y \in (0,1)$ as a continuous number of a vehicle. This so-called Lagrangian variable depends on a given position and the time, such that $y(x,t)$ gives the infinitesimal number of the vehicle that is located at $x$ at time $t$. On the other hand we are able to understand the Eulerian coordinate $x$ depending on the Lagrangian coordinate $y$ and the time, such that $x(y,t)$ represents the position of vehicle $y$ at time $t$. A further and more formal derivation of the connection between Eulerian and Lagrangian coordinates can be found in Theorem $\ref{eqvSOL}$.

We introduce some additional definitions for the microscopic model with macroscopic accidents. 
On a road with $N \in \mathbb{N}$ vehicles we define the local density of vehicle $i=1,...,N-1$ in the new microscopic model with accidents depending on the macroscopic model at time $t>0$ to be
\begin{align}
\begin{split}
\label{locDensityTilde}
\tilde{\rho}_i^{(N)}(t) &= \frac{L(N)}{\tilde{x}_{i+1}(t) - \tilde{x}_i (t)}, ~~
\tilde{\rho}_N^{(N)}(t) = \frac{L(N)}{\tilde{x}_{1}(t) - \tilde{x}_N (t) + b-a}.
\end{split}
\end{align}
For the inverse of the local density function, let us define for $i=1,...,N$
\begin{equation*}
w_i^{(N)}(t) = \frac{1}{\tilde{\rho}_i^{(N)}(t)}
\end{equation*}
as well as a Lagrangian velocity function $\tilde{v}: (0,1] \rightarrow \mathbb{R}$ with $\tilde{v}(x) = v(\frac{1}{x})$. Assuming $v(\rho) = \max \lbrace 0,1-\rho \rbrace$, $\tilde{v}$ is bounded for arguments $x\geq 1$ and Lipschitz continuous. To ensure mass conservation in the limit we allow the vehicle length $L$ to be dependent on $N$.
Throughout this section we additionally assume that there exist $\varepsilon>0$ and $1 + \varepsilon \leq K < \infty$ such that 
\begin{align}
\label{Bv-cond}
\begin{split}
& 1 +  \varepsilon \leq w_i^{(N)} (0) \leq K,~~~ i=1,...,N\\ 
& \sum_{i=1}^{N-1}  |w_{i+1}^{(N)} (0) - w_i^{(N)} (0)| + |w_1^{(N)}(0) - w_N^{(N)}(0)| \leq K.
\end{split}
\end{align}
Both assumptions seem quite reasonable. The first ensures that all inverses of the local densities are uniformly bounded at the initial time. Additionally, we require some kind of safety distance between the vehicles and assume that they do not start in a bumper-to-bumper situation. The second assumption is basically a bound on the total variation on the inverses of the local densities at initial time.  

We now introduce the Lagrangian way of describing the positions of the vehicles. It can be used to construct a Lax-Friedrichs-type sequence of local inverse density functions.
In Eulerian coordinates we define the function $c: \mathbb{R} \times \mathbb{R}_+ \rightarrow \mathbb{R}$
\begin{equation*}
c(x,t) = \tilde{c}_{ac}(x) c_{road}(x).
\end{equation*}
The time dependency of this function is incorporated  via the accident dependency.

We define a Lagrangian grid $\big\lbrace y_{i-\frac{1}{2}} \big\rbrace_{i=1}^N$ where $y_{i-\frac{1}{2}}=(i-1)L(N)$. To adjust the capacity function, we define $\tilde{c}: [0,1] \times \mathbb{R}_+ \rightarrow \mathbb{R}$ as the Lagrangian capacity function. On a grid point of the Lagrangian grid it is defined for $i=1,...,N$ by 
\begin{equation*}
\tilde{c}(y_{i-\frac{1}{2}},t) = c(x_i(t),t).
\end{equation*}
Between the grid points we define the function using a linear interpolation:
\begin{equation*}
\tilde{c}(y,t) = c\big((y-y_{i-\frac{1}{2}})x_{i+1}(t) + (y_{i+\frac{1}{2}} - y) x_i(t) \big) \text{,     for } y \in [y_{i-\frac{1}{2}}, y_{i+\frac{1}{2}}].
\end{equation*}
Using these definitions for the local inverse densities, the initial system of ordinary differential equations describing the position of a vehicle for $i=1,...,N-1$ transforms into
\begin{align*}
\dot{w}_i^{(N)} (t) =& \frac{\dot{\tilde{x}}_{i+1}(t) - \dot{\tilde{x}}_i(t)}{L(N)} = \frac{c(\tilde{x}_{i+1}(t), t)v(\tilde{\rho}_{i+1}(t)) - c(\tilde{x}_i(t), t)v(\tilde{\rho}_i(t))}{L(N)} \\
\overset{(\ref{AusgangsODE})}{=} &\frac{\tilde{c}(y_{i+\frac{1}{2}}, t)\tilde{v}(w_{i+1}^{(N)}(t)) - \tilde{c}(y_{i-\frac{1}{2}}, t)\tilde{v}(w_i^{(N)}(t))}{L(N)}.
\end{align*}
Equivalently for the $N$-th vehicle we have
\begin{equation*}
\dot{w}_N^{(N)} (t) = \frac{\tilde{c}(y_{\frac{1}{2}}, t)\tilde{v}(w_{1}^{(N)}(t)) - \tilde{c}(y_{N-\frac{1}{2}}, t)\tilde{v}(w_N^{(N)}(t))}{L(N)}.
\end{equation*}
The solutions to this system of ordinary differential equations can be used to develop a numerical scheme for the local inverse densities for the approximation of a weak solution of $(\ref{cpIv})$. 

Consider an equidistant time grid $(t_j)_{j \in \mathbb{N}}$, with $\Delta t = t_{j+1} - t_j$. Using a modified Euler approximation we approximate for $i=2,...,N-1$
\begin{equation}
\label{approxLxFstyle}
\tilde{x}_i(t_{j+1}) = \frac{1}{2}(\tilde{x}_{i+1}(t_j) + \tilde{x}_{i-1}(t_j)) + \Delta t~ c(\tilde{x}_i(t_j),t_j) v\bigg(\frac{L(N)}{\tilde{x}_{i+1}(t_j) - \tilde{x}_i(t_j)} \bigg).
\end{equation}
For the first and the last vehicle we add 
\begin{align}
\begin{split}
\label{approxLxFstyle1}
\tilde{x}_1(t_{j+1}) =& \frac{1}{2}(\tilde{x}_{2}(t_j) + \tilde{x}_{N}(t_j)-(b-a)) + \Delta t~ c(\tilde{x}_1(t_j),t_j) v\bigg(\frac{L(N)}{\tilde{x}_{2}(t_j) - \tilde{x}_1(t_j)} \bigg) \\
\tilde{x}_N(t_{j+1}) =& \frac{1}{2}(\tilde{x}_{1}(t_j) + \tilde{x}_{N-1}(t_j)+(b-a)) + \Delta t~ c(\tilde{x}_N(t_j),t_j) v\bigg(\frac{L(N)}{\tilde{x}_{1}(t_j) - \tilde{x}_N(t_j)+(b-a)} \bigg).
\end{split}
\end{align}
Using this for the local inverse density in the Lagrangian setting and for $i=2,...,N-1$, $j=0,1,2,...$ we get
\begin{align*}
w_i^{(N)} (t_{j+1}) =& \frac{1}{\rho_i^{(N)}(t_{j+1})} = \frac{\tilde{x}_{i+1}(t_{j+1}) - \tilde{x}_i(t_{j+1})}{L(N)} \nonumber \\
\overset{(\ref{approxLxFstyle})}{=}&\frac{1}{L(N)} \bigg[\frac{\tilde{x}_{i+2}(t_j) + \tilde{x}_{i}(t_j)}{2} + \Delta t~ c(\tilde{x}_{i+1}(t_j),t_j) \tilde{v}(w_{i+1}^{(N)}(t_j)) \nonumber \\
&-\Big(\frac{\tilde{x}_{i+1}(t_j) + \tilde{x}_{i-1}(t_j)}{2}+ \Delta t~ c(\tilde{x}_i(t_j),t_j) \tilde{v}(w_{i}^{(N)}(t_j))\Big)\bigg] \nonumber \\
=& \frac{w_{i+1}^{(N)} (t_j) + w_{i-1}^{(N)} (t_j)}{2} + \frac{\Delta t}{L(N)} \Big(\tilde{c}(y_{i+\frac{1}{2}},t_j) \tilde{v}(w_{i+1}^{(N)}(t_j)) - \tilde{c}(y_{i-\frac{1}{2}},t_j) \tilde{v}(w_{i}^{(N)}(t_j))  \Big).\nonumber 
\end{align*}
Let us make one further approximation and substitute 
\begin{align*}
&\tilde{c}(y_{i-\frac{1}{2}},t_j) \tilde{v}(w_{i}^{(N)}(t_j)) = \frac{\tilde{c}(y_{i+\frac{1}{2}},t_j) \tilde{v}(w_{i+1}^{(N)}(t_j)) + \tilde{c}(y_{i-\frac{3}{2}},t_j) \tilde{v}(w_{i-1}^{(N)}(t_j))}{2}.
\end{align*}
Then, we obtain an expression known from the Lax-Friedrichs scheme by
\begin{align}
\begin{split}
\label{H}
w_i^{(N)} (t_{j+1}) =&  \frac{w_{i+1}^{(N)} (t_j) + w_{i-1}^{(N)} (t_j)}{2}  + \frac{\Delta t}{2L(N)} \Big(\tilde{c}(y_{i+\frac{1}{2}},t_j) \tilde{v}(w_{i+1}^{(N)}(t_j)) - \tilde{c}(y_{i-\frac{3}{2}},t_j) \tilde{v}(w_{i-1}^{(N)}(t_j))\Big).
\end{split}
\end{align}
Similar computations can be made for the first and the last vehicle which yield
\begin{align}
\begin{split}
\label{H2}
w_1^{(N)} (t_{j+1}) =&  \frac{w_{2}^{(N)} (t_j) + w_{N}^{(N)} (t_j)}{2} + \frac{\Delta t}{2L(N)} \Big(\tilde{c}(y_{\frac{3}{2}},t_j) \tilde{v}(w_{2}^{(N)}(t_j)) - \tilde{c}(y_{N-\frac{1}{2}},t_j) \tilde{v}(w_{N}^{(N)}(t_j))\Big),  \\
w_N^{(N)} (t_{j+1}) =&  \frac{w_{1}^{(N)} (t_j) + w_{N-1}^{(N)} (t_j)}{2}  + \frac{\Delta t}{2L(N)} \Big(\tilde{c}(y_{\frac{1}{2}},t_j) \tilde{v}(w_{1}^{(N)}(t_j)) - \tilde{c}(y_{N-\frac{3}{2}},t_j) \tilde{v}(w_{N-1}^{(N)}(t_j))\Big).
\end{split}
\end{align}
Only information from the microscopic model with macroscopic accidents was used to construct this scheme. As we have been working with single vehicles so far, we define a piecewise constant function for the initial density by
\begin{equation}
\label{defw0}
w_0^{(N)}(y)=w_i^{(N)}(0) \text{,   for } y \in [y_{i-\frac{1}{2}}, y_{i+\frac{1}{2}}).
\end{equation}

\subsection{Convergence to a Weak Solution}
Under some assumptions we are going to show that such a Lax-Friedrichs scheme approximates a weak solution $w: [0,1] \times \mathbb{R}_+ \rightarrow \mathbb{R}$ of the following conservation law
\begin{align}
\begin{split}
\label{cpIv}
w_t - (\tilde{c}(y,t) \tilde{v}(w))_y &= 0 ,~~~w(0,y) = w_0(y) 
\end{split}
\end{align}
for some initial local inverse density $w_0$. 
First, we present a result that states under which conditions we achieve convergence of a sequence to a weak solution of a Cauchy problem having a space and time dependent flux function. Afterwards, we develop some results that will help to apply Theorem $\ref{Thm45}$ to our setting.

\begin{theorem}[Theorem 4.5 in \cite{B}]
\label{Thm45}
Consider the conservation law
\begin{align}
\label{csvlaw2}
\begin{split}
u_t - f(c(x,t),u)_x &= 0,~~~ t>0, ~ x \in \mathbb{R}, ~~~u(0,x) = u_0(x).
\end{split}
\end{align}
Assume that for the discretization parameters in space $\Delta x >0$ and time $\Delta t>0$ it holds
\begin{enumerate}
\item $u_0 \in L^\infty(\mathbb{R}), ~~~ a \leq u_0(x) \leq b,$  for almost all  $x \in \mathbb{R}$,
\item $c \in L^\infty (\mathbb{R} \times \mathbb{R}_+) \cap BV_{loc}(\mathbb{R} \times \mathbb{R}_+)$,     $\alpha \leq c(x,t) \leq \beta$  for almost all  $(x,t) \in \mathbb{R} \times \mathbb{R}_+$, 
\item $u \mapsto f(c,u) \in C^2([a,b])$ , for all  $c \in [\alpha, \beta]$, 
\item $c \mapsto f(c,u) \in C^1([\alpha, \beta])$ , for all  $u \in [a,b]$, 
\item for almost all $(x,t) \in \mathbb{R} \times \mathbb{R}_+:~ \frac{\partial^2}{\partial u^2} f(c(x,t),u) \neq 0$  for almost all  $u \in [a,b]$,
\item the Lax-Friedrichs scheme approximation stays uniformly bounded,
\item $\lambda L_{f_u} \leq 1- \kappa, ~~~ \lambda = \frac{\Delta t}{\Delta x},$  for some $\kappa \in (0,1)$,
\end{enumerate}
where $L_{f_u}$ denotes the Lipschitz constant of $f$ in the second argument.

Discretize the time domain $\mathbb{R}_+$ via $t_j = j \Delta t,~ j \in \mathbb{N}_0$ and the spatial component $\mathbb{R}$ by $x_i = i \Delta x,~ i \in \mathbb{Z}$. Define $c_i^j = \underset{x \searrow x_i} {\lim} c(x,\hat{t}_j)$ for any $\hat{t}_j \in [t_j, t_{j+1})$ for which the limit exists. Then, an one step approximation of the Lax-Friedrichs scheme writes for $j>0$
\begin{equation*}
u_i (t_{j+1}) = \frac{1}{2}(u_{i+1} + u_{i-1}) + \frac{\lambda}{2} \Big(f(c_{i+1}^j,u_{i+1}(t_j)) - f(c_{i-1}^j, u_{i-1}(t_j))\Big). 
\end{equation*}
For the initial time we set
\begin{equation*}
u_i (t_0) = \frac{1}{2 \Delta x} \int_{x_{i-1}}^{x_{i+1}} u_0(x) dx.
\end{equation*}
Using a staggered form of the Lax-Friedrichs scheme, we define the piecewiese constant function $u^\Delta$ for $i+j$ being even as
\begin{equation*}
u^{\Delta} (t,y) = u_i (t_j),~~~ \text{for } (t,y) \in [t_j, t_{j+1}) \times [x_{i-1}, x_{i+1}).
\end{equation*}
Passing if necessary to a subsequence, we have $u^\Delta \rightarrow u$
as $\Delta t \rightarrow 0$ and $\Delta x \rightarrow 0$ in $L_{loc}^p (\mathbb{R}_+ \times \mathbb{R})$ for any $p<\infty$ and where $u \in L^\infty(\mathbb{R}_+ \times \mathbb{R})$ is a weak solution to $(\ref{csvlaw2})$.
\end{theorem}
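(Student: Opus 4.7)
The plan is to follow the classical compactness-and-consistency scheme for Lax--Friedrichs discretisations of conservation laws, adapted to the $(x,t)$-dependent flux $f(c(x,t),u)$. Concretely, I would (i) record the uniform $L^\infty$ bound on $u^\Delta$, which is assumption~6 by hypothesis; (ii) obtain a uniform spatial total-variation bound on $u^\Delta(t,\cdot)$ on compacts; (iii) obtain $L^1_{\mathrm{loc}}$-equicontinuity in time; and (iv) extract a convergent subsequence by Helly's theorem and identify the limit as a weak solution by a summation-by-parts argument against a test function.

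For the spatial TV estimate, I would compute the increment $u_{i+1}(t_{j+1})-u_{i-1}(t_{j+1})$ from the staggered update and use assumptions~3 and~4 to split the flux difference into a part depending on $u$ (controlled by the CFL condition $\lambda L_{f_u} \le 1-\kappa$ of assumption~7) and a part depending on $c$ (controlled linearly by $|c_{i+1}^j - c_{i-1}^j|$ via the $C^1$-regularity in $c$). Summing over $i$ yields a recursion of the form
\begin{equation*}
\mathrm{TV}(u^\Delta(t_{j+1},\cdot)) \le \mathrm{TV}(u^\Delta(t_j,\cdot)) + C\, \mathrm{TV}_x(c(\cdot,\hat t_j)),
\end{equation*}
and iterating together with assumption~2 gives the desired uniform bound. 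The $L^1_{\mathrm{loc}}$ time equicontinuity then follows by writing the scheme as a finite-difference approximation of $u_t = f(c,u)_x$, using the just-established spatial TV bound and the CFL condition.

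With these bounds in hand, Helly's selection theorem together with a diagonal argument yields a subsequence $u^\Delta \to u$ in $L^1_{\mathrm{loc}}$ and, via interpolation against the $L^\infty$ bound, in $L^p_{\mathrm{loc}}$ for every $p<\infty$. To identify $u$ as a weak solution I would test the discrete update against $\varphi \in C_c^1(\mathbb{R}\times[0,T))$ and carry out discrete summation by parts; the Lax--Friedrichs numerical diffusion scales like $\Delta x^2/\Delta t$ and vanishes distributionally under the CFL condition, while the convective part converges to $\int\!\!\int (u \varphi_t - f(c,u)\varphi_x)\,dx\,dt + \int u_0(x)\varphi(x,0)\,dx$ by dominated convergence, using a further almost-everywhere convergent subsequence of $u^\Delta$ together with the $L^1_{\mathrm{loc}}$ convergence of the sampled coefficient $c^\Delta(x,t)=c_i^j$ on $[x_{i-1},x_{i+1})\times[t_j,t_{j+1})$, granted by $c \in BV_{\mathrm{loc}}\cap L^\infty$. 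The main obstacle is the spatial TV estimate: the $(x,t)$-dependence of $c$ produces source-like contributions in the BV recursion that must be absorbed by $\mathrm{TV}(c)$, and the staggered grid forces careful bookkeeping of the pairing between $u_i(t_j)$ and the correct sample $c_i^j$, although conceptually this is the standard Karlsen--Risebro / Towers-type argument and no new analytic idea is required beyond assumptions~1--7.
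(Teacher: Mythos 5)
First, a point of reference: the paper does not prove this statement at all --- it is quoted verbatim as Theorem~4.5 of \cite{B} and used as a black box. So there is no internal proof to compare against; what can be assessed is whether your proposed argument would actually establish the theorem under the stated hypotheses. It would not, and the gap is in step (ii), the uniform spatial total-variation bound.

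Two things break that step. First, assumption~1 only gives $u_0 \in L^\infty$; there is no BV hypothesis on the initial data, so the recursion $\mathrm{TV}(u^\Delta(t_{j+1},\cdot)) \le \mathrm{TV}(u^\Delta(t_j,\cdot)) + C\,\mathrm{TV}_x(c(\cdot,\hat t_j))$ has no finite anchor at $j=0$. Second, even granting BV initial data, the coefficient contribution you isolate is of size $C\,\mathrm{TV}_x(c(\cdot,\hat t_j))$ \emph{per time step}, with no factor of $\Delta t$: near a genuine jump of $c$ (which $c \in BV_{\mathrm{loc}}$ expressly permits) the term $|f(c_{i+1}^j,v)-f(c_{i-1}^j,v)|$ is $O(1)$, not $O(\Delta x)$. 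Summing over $j \sim T/\Delta t$ steps then gives a bound that diverges as $\Delta t \to 0$; the recursion only closes when $c$ is Lipschitz in $x$, which is strictly stronger than assumption~2. Indeed, for conservation laws with a discontinuous spatial coefficient it is known that the solution itself need not be of bounded variation, which is precisely why the cited result is formulated the way it is. The tell-tale sign is assumption~5, the almost-everywhere genuine nonlinearity $\partial_u^2 f(c(x,t),u) \neq 0$, which your argument never uses: if a Helly-type BV compactness argument worked, that hypothesis would be superfluous. It is there because the intended mechanism is compensated compactness --- one shows that $\eta(u^\Delta)_t - q(u^\Delta)_x$ lies in a compact subset of $H^{-1}_{\mathrm{loc}}$ for entropy--entropy-flux pairs (using the uniform bound of assumption~6 and the CFL condition to control the numerical dissipation), and then uses genuine nonlinearity to reduce the associated Young measure to a Dirac mass, yielding the subsequential $L^p_{\mathrm{loc}}$ convergence claimed in the conclusion. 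Your step (iv), the Lax--Wendroff-type consistency argument identifying the limit as a weak solution, is fine once strong convergence is in hand; it is the compactness step that needs to be replaced.
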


We now try to adapt this theorem to the setting in $(\ref{cpIv})$. The only challenging condition will be to show the uniform bound on the Lax-Friedrichs approximation. Therefore, we introduce the following lemma.
\begin{lemma}
\label{boundednessLemma}
Assume $c \in C^{0,1}(\mathbb{R})$ uniformly bounded and $v \in C^1(\mathbb{R}_+)$. Set $f(y,w) = c(y) v(w)$. If $(\ref{Bv-cond})$ holds and the CFL condition is satisfied, the Lax-Friedrichs approximations from $(\ref{H})$ and $\ref{H2}$ are uniformly bounded, setting $v(w) = 1 - \frac{1}{w}$. 
\end{lemma}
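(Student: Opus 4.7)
The plan is to argue by induction on the time step index $j$, proving simultaneously the uniform lower bound $w_i^{(N)}(t_j) \geq 1$ and the uniform upper bound $w_i^{(N)}(t_j) \leq e^{T L_c} K$ for every $i$ and every $t_j \le T$, where $L_c$ denotes the Lipschitz constant of $c$. The base case $j=0$ is immediate from $(\ref{Bv-cond})$ since $1+\varepsilon \leq w_i^{(N)}(0) \leq K$.

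For the inductive step on the lower bound I substitute $z_i^j := w_i^{(N)}(t_j) - 1$ into $(\ref{H})$. Using $\tilde v(w) = 1 - 1/w = (w-1)/w$, a short algebraic rearrangement gives
\begin{equation*}
z_i^{j+1} = \frac{z_{i+1}^j}{2}\Bigl(1 + \frac{\lambda\,\tilde c(y_{i+1/2},t_j)}{w_{i+1}^{(N)}(t_j)}\Bigr) + \frac{z_{i-1}^j}{2}\Bigl(1 - \frac{\lambda\,\tilde c(y_{i-3/2},t_j)}{w_{i-1}^{(N)}(t_j)}\Bigr),
\end{equation*}
with $\lambda := \Delta t / L(N)$. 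Since $\tilde c$ is uniformly bounded by some $c_{\max}$ and the CFL condition forces $\lambda c_{\max} \leq 1$, while the inductive hypothesis $w_{i-1}^{(N)}(t_j) \geq 1$ makes the denominator at least one, the bracket in the second term is non-negative. Both summands are therefore non-negative, so $z_i^{j+1} \geq 0$, i.e.\ $w_i^{(N)}(t_{j+1}) \geq 1$. The wrap-around cases $(\ref{H2})$ for $i=1,N$ are handled identically by reading the indices modulo $N$ and using the periodicity of $c$.

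For the upper bound I expand the flux difference by the mean-value theorem,
\begin{equation*}
\tilde c_{i+1/2} \tilde v(w_{i+1}) - \tilde c_{i-3/2} \tilde v(w_{i-1}) = \tilde c_{i+1/2}\,\tilde v'(\xi)(w_{i+1}-w_{i-1}) + (\tilde c_{i+1/2} - \tilde c_{i-3/2})\tilde v(w_{i-1}),
\end{equation*}
so that $(\ref{H})$ rewrites as a convex combination of $w_{i+1}^{(N)}(t_j)$ and $w_{i-1}^{(N)}(t_j)$ (using $\tilde v'(\xi) = 1/\xi^2 \le 1$ thanks to the lower bound, together with the CFL condition to keep both weights non-negative and summing to one) plus the remainder $\tfrac{\lambda}{2}(\tilde c_{i+1/2} - \tilde c_{i-3/2})\tilde v(w_{i-1}^{(N)}(t_j))$. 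The convex combination is bounded by $M_j := \max_i w_i^{(N)}(t_j)$. For the remainder, Lipschitz continuity of $c$ together with $|\tilde x_{i+1}(t_j)-\tilde x_{i-1}(t_j)| = L(N)\bigl(w_i^{(N)}(t_j) + w_{i-1}^{(N)}(t_j)\bigr)$ yields $|\tilde c_{i+1/2} - \tilde c_{i-3/2}| \le L_c L(N)\bigl(w_i^{(N)} + w_{i-1}^{(N)}\bigr)$, and with $\tilde v \le 1$ (from the lower bound) the remainder is at most $\Delta t\, L_c M_j$. Hence $M_{j+1} \leq (1+\Delta t L_c)M_j$, and iterating yields $M_j \leq (1+\Delta t L_c)^j K \leq e^{T L_c}K$.

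The main obstacle is the lower bound: a naive convex-combination estimate degrades by $O(\Delta t)$ per step and cannot keep $w \geq 1$ over many iterations. The decisive trick is to reformulate the scheme in the variable $z = w-1$, because the particular algebraic form $\tilde v(w) = z/w$ makes both contributions in the update of $z$ manifestly non-negative under the same CFL constraint that will be used in Theorem $\ref{Thm45}$.
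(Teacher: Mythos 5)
Your proof is correct, and its upper\nobreakdash-bound half is essentially the paper's argument: add and subtract $f(y_{i+\frac{1}{2}}, w_{i-1}^{(N)}(t_j))$, apply the mean value theorem in $w$, recognize a convex combination of $w_{i-1}^{(N)}(t_j)$ and $w_{i+1}^{(N)}(t_j)$ under the CFL condition, and absorb the Lipschitz remainder coming from $c$. Two differences are worth noting. First, you estimate $|\tilde c_{i+\frac{1}{2}} - \tilde c_{i-\frac{3}{2}}|$ through the Eulerian distance $L(N)(w_i^{(N)} + w_{i-1}^{(N)})$, which makes the remainder proportional to $M_j$ and yields the multiplicative bound $e^{TL_c}K$; the paper reads $c$ directly as a function of the Lagrangian variable (consistent with the hypothesis $f(y,w)=c(y)v(w)$), so its remainder is the constant $\Delta t\, L_c$ and the bound grows only linearly, $\|w^{(N)}(0)\|_\infty + T L_c$. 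Both are finite, so nothing is lost for condition 6 of Theorem \ref{Thm45}. Second --- and this is the genuinely different part --- your substitution $z = w-1$, which rewrites the scheme as $z_i^{j+1} = \tfrac{1}{2}z_{i+1}^j\bigl(1+\lambda\tilde c_{i+\frac{1}{2}}/w_{i+1}^{(N)}\bigr) + \tfrac{1}{2}z_{i-1}^j\bigl(1-\lambda\tilde c_{i-\frac{3}{2}}/w_{i-1}^{(N)}\bigr)$ with manifestly non\nobreakdash-negative weights under the CFL condition, preserves the lower bound $w\ge 1$ for \emph{all} time steps. The paper proves no lower bound in this lemma; it defers that to Lemma \ref{lemmaCarD}, whose cruder one-step estimate only keeps $w \ge 1+\tilde\varepsilon$ under the extra restriction $\Delta t \le (\varepsilon-\tilde\varepsilon)/L_c$ and consequently forces the finite horizon $(\ref{restrT})$, $T \le \varepsilon/L_c$; your argument removes that restriction entirely. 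It also quietly supplies something the paper's own upper-bound computation uses without comment: dropping the factor $v(w_{i-1}^{(N)})$ from the remainder and taking $|1-\lambda f_w|+|1+\lambda f_w| = 2$ both presuppose $|v(w)|\le 1$ and $|f_w| = c/w^2 \le c_{\max}$, i.e.\ exactly the lower bound $w\ge 1$ that your simultaneous induction provides.
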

\begin{proof}

Using $(\ref{Bv-cond})$ $w_0$ initially is bounded by $K = \| w^{(N)}(0) \|_\infty$ from above. Set $\lambda = \frac{\Delta t}{\Delta y}$ where in the Lagrangian grid $\Delta y$ just corresponds to the length of one vehicle. We calculate the one step increase of the Lax Friedrichs bound
\begin{align*}
|w_i^{(N)} (t_{j+1})| =& \Big|\frac{w_{i-1}^{(N)}(t_j) + w_{i+1}^{(N)}(t_j)}{2} + \frac{\lambda}{2} \Big(f(y_{i+\frac{1}{2}}, w_{i+1}^{(N)}(t_j)) - (f(y_{i-\frac{3}{2}}, w_{i-1}^{(N)}(t_j))\Big) \Big|\\
=& \Big|\frac{w_{i-1}^{(N)}(t_j) + w_{i+1}^{(N)}(t_j)}{2} + \frac{\lambda}{2} \Big(f(y_{i+\frac{1}{2}}, w_{i+1}^{(N)}(t_j)) - f(y_{i+\frac{1}{2}}, w_{i-1}^{(N)}(t_j)) \\
&+ f(y_{i+\frac{1}{2}}, w_{i-1}^{(N)}(t_j))  - (f(y_{i-\frac{3}{2}}, w_{i-1}^{(N)}(t_j))\Big) \Big| \\
=& \Big|\frac{w_{i-1}^{(N)}(t_j) + w_{i+1}^{(N)}(t_j)}{2}  + \frac{\lambda}{2} \Big(f_w(y_{i+\frac{1}{2}}, \xi)(w_{i+1}^{(N)}(t_j) - w_{i-1}^{(N)}(t_j))\\
&  + f(y_{i+\frac{1}{2}}, w_{i-1}^{(N)}(t_j))  - (f(y_{i-\frac{3}{2}}, w_{i-1}^{(N)}(t_j))\Big)\Big|\\
\leq& \Big| \frac{1}{2} \Big( w_{i-1}^{(N)}(t_j) (1- \lambda f_w(y_{i+\frac{1}{2}}, \xi) + w_{i+1}^{(N)}(t_j) (1 + \lambda f_w(y_{i+\frac{1}{2}}, \xi)) \Big) \Big|  \\
&+ \Big|\Delta t L_c \frac{y_{i+\frac{1}{2}} - y_{i-\frac{3}{2}}}{2 \Delta y}\Big| \\
\leq & \|w^{(N)} (t_j) \|_\infty + \Delta t L_c 
\end{align*}
for some $\xi \in \mathbb{R}$ being a convex combination of $w_{i-1}^{(N)}(t_j)$ and $w_{i+1}^{(N)}(t_j)$. Here, $L_c$ denotes the Lipschitz constant of $c$ and $f_w$ the partial derivative of $f$ with respect to $w$. Then, for a finite time horizon $T >0$ we get
\begin{align*}
\| w^{(N)}(T) \|_\infty &\leq \| w^{(N)}(0) \|_\infty + \sum_{j=1}^{\frac{T}{\Delta t}}  \Delta t L_c  \leq \| w^{(N)}(0) \|_\infty +  T~ L_c < \infty. \qedhere
\end{align*}
\end{proof} This statement seems to be sufficient to fulfill condition 6 in Theorem $\ref{Thm45}$. But since we are be interested in inverting the local densities later, we also require an additional bound from below. 

\begin{lemma}
\label{lemmaCarD}
Assume $c \in C^{0,1}(\mathbb{R})$ uniformly bounded, $v \in C^1(\mathbb{R}_+)$. Set $f(y,w) = c(y) v(w)$ and $v(w) = 1 - \frac{1}{w}$. Let $\varepsilon>0$ be used as in $(\ref{Bv-cond})$. If for the Lipschitz constant of $f$ in the second argument on $[1,\infty)$, $L_{f_w}$, the CFL condition $\frac{\Delta t}{\Delta y}L_{f_w}<1$ is satisfied, then a one-step Lax Friedrichs approximation from $(\ref{H})$ is bounded from below by $1+\tilde{\varepsilon}$ if
\begin{equation}
\label{carDistance}
\Delta t \leq \frac{\varepsilon - \tilde{\varepsilon}}{L_c}
\end{equation}
for $0<\tilde{\varepsilon}<\varepsilon$ and $L_c>0$ the Lipschitz constant of $c$. 
\end{lemma}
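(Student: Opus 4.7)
The plan is to mirror the argument of the preceding boundedness-from-above lemma, but tracking the lower bound instead. The core idea is to rewrite one step of the Lax--Friedrichs update $(\ref{H})$ as a convex combination of $w_{i-1}^{(N)}(t_j)$ and $w_{i+1}^{(N)}(t_j)$, plus a small remainder that measures the variation of $c$ between the spatial arguments $y_{i-\frac{3}{2}}$ and $y_{i+\frac{1}{2}}$.

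Concretely, I would first add and subtract $f(y_{i+\frac{1}{2}}, w_{i-1}^{(N)}(t_j))$ inside the flux-difference term of $(\ref{H})$, splitting it into a pure $w$-increment at the frozen spatial point $y_{i+\frac{1}{2}}$ and a pure $c$-increment evaluated at the frozen value $w_{i-1}^{(N)}(t_j)$. Applying the mean value theorem to the $w$-increment produces $f_w(y_{i+\frac{1}{2}},\xi)(w_{i+1}^{(N)}(t_j) - w_{i-1}^{(N)}(t_j))$, which combines with $(w_{i-1}^{(N)}(t_j) + w_{i+1}^{(N)}(t_j))/2$ into
$$\frac{1-\lambda f_w(y_{i+\frac{1}{2}},\xi)}{2}\,w_{i-1}^{(N)}(t_j) + \frac{1+\lambda f_w(y_{i+\frac{1}{2}},\xi)}{2}\,w_{i+1}^{(N)}(t_j).$$
Under the CFL hypothesis $\lambda L_{f_w} < 1$ the two weights are nonnegative and sum to one, so assumption $(\ref{Bv-cond})$ gives that this convex combination is bounded below by $1+\varepsilon$. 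The leftover piece is $\tfrac{\lambda}{2}[c(y_{i+\frac{1}{2}}) - c(y_{i-\frac{3}{2}})]\,v(w_{i-1}^{(N)}(t_j))$; I would control it using $|y_{i+\frac{1}{2}} - y_{i-\frac{3}{2}}| = 2\Delta y$, Lipschitz continuity of $c$ with constant $L_c$, and the key observation that $v(w) = 1 - 1/w \in [0,1)$ whenever $w \geq 1$, so that $|v(w_{i-1}^{(N)}(t_j))| \leq 1$. Together with $\lambda = \Delta t/\Delta y$, the remainder has magnitude at most $\Delta t\, L_c$.

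Combining these two estimates yields $w_i^{(N)}(t_{j+1}) \geq (1+\varepsilon) - \Delta t\, L_c$, and condition $(\ref{carDistance})$ is exactly what is required to make the right-hand side $\geq 1+\tilde{\varepsilon}$. The analogous bounds for the first and last vehicles coming from $(\ref{H2})$ follow by the identical argument with periodic indices. I do not expect a real obstacle: the proof is structurally a mirror of the boundedness-from-above lemma. The one subtle point worth flagging is the uniform estimate $|v(w_{i-1}^{(N)}(t_j))| \leq 1$, which rests on knowing $w_{i-1}^{(N)}(t_j) \geq 1$ already at the previous step; this is ultimately why the statement is phrased as a \emph{one-step} estimate rather than an iterated one, since a naive iteration would lose $\Delta t\, L_c$ per step and could eventually drive the lower bound below $1$, breaking the very hypothesis on $v$ that drives the argument.
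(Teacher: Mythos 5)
Your proposal is correct and follows essentially the same route as the paper: both split the flux difference in $(\ref{H})$ into a $w$-increment controlled by the CFL condition (yielding a convex combination bounded below by $\min\{w_{i-1}^{(N)},w_{i+1}^{(N)}\}\geq 1+\varepsilon$) and a spatial $c$-increment bounded by $\Delta t\,L_c$ using the Lipschitz continuity of $c$ and $0\leq v(w)\leq 1$ for $w\geq 1$. Your closing remark about why only a one-step estimate is possible also matches the paper's subsequent discussion of the restriction $(\ref{restrT})$ on the admissible time horizon.
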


\begin{proof}
By $(\ref{Bv-cond})$ we know that $w_i^{(N)}(0)$ is bounded from below by $1+\varepsilon$. Denote $L_c>0$ the Lipschitz constant of $c$ and $L_v>0$ the Lipschitz constant of $v$. For $\lambda = \frac{\Delta t}{\Delta y}$ we get
\begin{align*}
w_i^{(N)} (t_1) =&  \frac{w^{(N)}_{i-1}(t_0) + w^{(N)}_{i+1}(t_0)}{2} + \frac{\lambda}{2} \Big(f(y_{i+\frac{3}{2}}, w^{(N)}_{i+1}(t_0)) - f(y_{i-\frac{1}{2}}, w^{(N)}_{i-1}(t_0))\Big)\\
\geq & \frac{w^{(N)}_{i-1}(t_0) + w^{(N)}_{i+1}(t_0)}{2} + \frac{\lambda}{2} \Big(c(y_{i+\frac{3}{2}})v(w_{i+1}^{(N)}(t_0)) - (c(y_{i+\frac{3}{2}}) + 2\Delta y L_c) v(w_{i-1}^{(N)}(t_0))\Big) \\
\geq & \frac{w^{(N)}_{i-1}(t_0) + w^{(N)}_{i+1}(t_0)}{2} - \frac{\lambda}{2} c(y_{i+\frac{3}{2}}) L_v \big|w_{i+1}^{(N)}(t_0) - w_{i-1}^{(N)}(t_0) \big| - \Delta t L_c v(w_{i-1}^{(N)}(t_0)).
\end{align*}
Using the CFL condition, the initial boundedness of $w_i^{(N)}(t_0)$ and the bound of the velocity function for an argument larger than 1, we get
\begin{align*}
w_i^{(N)} (t_1) &\geq \frac{1}{2} \big(2 \min \big\lbrace w_{i+1}^{(N)}(t_0), w_{i-1}^{(N)}(t_0) \big\rbrace \big) - \Delta t L_c v(w_{i-1}^{(N)}(t_0)) \\
& \geq 1 + \varepsilon - \Delta t L_c \geq 1+\tilde{\varepsilon}. \qedhere
\end{align*}
\end{proof}
The result is applicable for any other time step fulfilling the mentioned conditions. This lemma does not prove that the Lax-Friedrichs approximations can be uniformly bounded from below for any time horizon $T>0$. Let $(\Delta t)_n$ denote the step size and $\varepsilon_n$ the safety distance in the $n$-th step, where $\varepsilon_0=\varepsilon$. Set the partition 
\begin{align*}
\mathcal{P} = \Big\lbrace & k \in ((\Delta t)_n)_{n \in \lbrace1,...,N^k\rbrace} ~:~ (\Delta t)_n \leq \frac{\varepsilon^k_{n-1} - \varepsilon^k_n}{L_c} ~ \forall n=1,...,N^k,~\varepsilon^k_{N^k} \geq 0,  \\
&\varepsilon^k_n >0~\forall n=1,...,N^k-1,~ \varepsilon^k_{n+1} < \varepsilon^k_n~ \forall n=1,...,N^k-1, ~ N^k \in \mathbb{N} \Big\rbrace.
\end{align*}
We can only ensure the boundedness by 1 from below for 
\begin{equation}
\label{restrT}
T \leq \underset{k \in \mathcal{P}}{\sup} \Bigg(\sum_{i=1}^{N^k} (\Delta t)_{i}\Bigg) = \sum_{i=1}^{N^{k^*}} \frac{\varepsilon^{k^*}_{i-1} - \varepsilon^{k^*}_i}{L_c} = \frac{\varepsilon}{L_c}, 
\end{equation}
for ${k^*} =$ argmax $\Big(\underset{k \in \mathcal{P}}{\sup} \Big(\sum_{i=1}^{N^k} (\Delta t)_{i}\Big)\Big)$.\\
Now we apply Theorem $\ref{Thm45}$ to our setting with the inverse local densities.
\begin{theorem}
\label{corWeaksol}
Consider the Cauchy problem
\begin{align}
\label{csvlaw3}
\begin{split}
w_t - (\hat{c}(y,t) \tilde{v}(w))_y &= 0 ~~~ y\in [0,1], ~ t>0\\
w(0,y) &= w_0(y).
\end{split}
\end{align}
Assume that for the initial conditions it holds $w_0 \in L^\infty([0,1])$ and $(\ref{Bv-cond})$. Define $\hat{c} \in C^{0,1}([0,1] \times \mathbb{R}_+)$ as the smoothed version of the bounded capacity reduction function $\tilde{c}$, where at each discontinuity of $\tilde{c}$ we use a linear smoothing in an interval of length $\varepsilon>0$. The space dependent flux function is given by $f(\hat{c},w) = \hat{c} \tilde{v}(w)$, where $\tilde{v}(w) = 1 - \frac{1}{w} $. Using the Lagrangian grid, we set
\begin{equation*}
w_i^{(N)} (t_0) = \frac{1}{y_{i+\frac{1}{2}} - y_{i-\frac{3}{2}}} \int_{y_{i-\frac{3}{2}}}^{y_{i+\frac{1}{2}}} w_0^{(N)}(y) dy,
\end{equation*}
where $w_0^{(N)}$ was defined as in $(\ref{defw0})$.
Discretize the time domain $\mathbb{R}_+$ via $t_j = j \Delta t,~ j \in \mathbb{N}_0$ and the spatial component $[0,1]$ by using the Lagrangian grid and $y_{i-\frac{1}{2}} = \frac{i}{N},~ i \in \lbrace 0,1,...,N \rbrace$. Then $\Delta y = \frac{1}{N}$.
For $t_j>0$ define $w_i^{(N)} (t_j)$ as in $(\ref{H})$ and $(\ref{H2})$.
Assume that the CFL condition is met, i.e. $\frac{\Delta t}{\Delta y} \| f_u \|_\infty \leq 1$ and that condition $(\ref{carDistance})$ holds.
We define the piecewise constant local inverse density function for $i+j$ even as
\begin{equation*}
w_\Delta^{(N)}(y,t) = w_i^{(N)}(t_j) \text{,  for } (y,t) \in [y_{i-\frac{3}{2}}, y_{i+\frac{1}{2}}) \times [t_j, t_{j+1}).
\end{equation*} 
Passing if necessary to a subsequence, we have
\begin{equation*}
w_\Delta^{(N)} \rightarrow w
\end{equation*}
as $\Delta t,\Delta x \rightarrow 0$ and $N \rightarrow \infty$ in $L_{loc}^p ([0,1] \times \mathbb{R}_+)$ for any $p<\infty$ and where $w \in L^\infty([0,1] \times \mathbb{R}_+)$ is a weak solution to $(\ref{csvlaw3})$.
\end{theorem}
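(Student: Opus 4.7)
The plan is to verify the seven hypotheses of Theorem~\ref{Thm45} for the Cauchy problem (\ref{csvlaw3}) with spatial variable $y$, flux $f(\hat c,w) = \hat c\,\tilde v(w)$, and piecewise-constant initial datum $w_0^{(N)}$ from (\ref{defw0}). Although Theorem~\ref{Thm45} is stated on all of $\mathbb{R}$, the periodic setting allows us to extend $w_0^{(N)}$ and $\hat c$ periodically to the whole line, so the scheme (\ref{H})--(\ref{H2}) becomes exactly the Lax--Friedrichs iteration of Theorem~\ref{Thm45} restricted to one period; the conclusion on $[0,1]\times\mathbb{R}_+$ follows from the one on $\mathbb{R}\times\mathbb{R}_+$.

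Hypotheses 1 and 2 are immediate: by (\ref{Bv-cond}) we have $1+\varepsilon\le w_0^{(N)}\le K$ uniformly in $N$, and by construction $\hat c$ is bounded and Lipschitz (hence in $BV_{\mathrm{loc}}$), with $0 < (1-c_{max})\inf c_{road}\le \hat c \le \sup c_{road}$. Hypotheses 3--5 are algebraic: since $\tilde v(w) = 1-1/w$ is $C^\infty$ on any interval $[1+\tilde\varepsilon,K']\subset(0,\infty)$, we have $f(\hat c,\cdot)\in C^2$ on the range of the iterates; $f$ depends linearly on $\hat c$, so hypothesis 4 is trivial; and $\partial^2_w f = -2\hat c/w^3\neq 0$ whenever $\hat c>0$, which gives hypothesis 5. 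Hypothesis 7 (the CFL condition) is part of the assumptions of the theorem.

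The crucial point is hypothesis 6, the uniform bound on the Lax--Friedrichs iterates. The upper bound is supplied by Lemma~\ref{boundednessLemma}: iterating the one-step estimate yields $\|w^{(N)}(T)\|_\infty\le \|w^{(N)}(0)\|_\infty+T\,L_{\hat c}$ on any bounded interval $[0,T]$, uniformly in $N$ and $\Delta t$. For the lower bound we use Lemma~\ref{lemmaCarD}: starting from $\varepsilon_0=\varepsilon$ and choosing a strictly decreasing sequence $(\varepsilon_n)$ with $(\Delta t)_n\le (\varepsilon_{n-1}-\varepsilon_n)/L_c$, we maintain $w_i^{(N)}(t_n)\ge 1+\varepsilon_n\ge 1$ step by step. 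Taking the supremum over admissible sequences gives the time window (\ref{restrT}), so on $[0,\varepsilon/L_c]$ the iterates satisfy the uniform two-sided bound $1\le w_\Delta^{(N)}\le \|w^{(N)}(0)\|_\infty+T\,L_{\hat c}$, which is precisely hypothesis 6.

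With all seven hypotheses of Theorem~\ref{Thm45} verified, one extracts a subsequence of $w_\Delta^{(N)}$ that converges in $L^p_{\mathrm{loc}}([0,1]\times\mathbb{R}_+)$ for every $p<\infty$ to some $w\in L^\infty([0,1]\times\mathbb{R}_+)$ which is a weak solution of (\ref{csvlaw3}). The main obstacle is precisely the lower bound: because $\tilde v$ has a singularity at $w=0$, one must smooth $\tilde c$ into the Lipschitz function $\hat c$ (to apply Lemma~\ref{lemmaCarD} at all) and accept the time-horizon restriction (\ref{restrT}) in order to keep the iterates away from that singularity. Once these structural precautions are in place, the convergence is a direct consequence of the cited Theorem~\ref{Thm45}.
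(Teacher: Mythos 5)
Your proof is correct and follows essentially the same route as the paper: one verifies the seven hypotheses of Theorem~\ref{Thm45} for the flux $f(\hat c,w)=\hat c\,\tilde v(w)$, with Lemma~\ref{boundednessLemma} supplying the uniform bound for hypothesis 6 and the remaining conditions checked directly. You are in fact slightly more explicit than the paper's own proof, which cites only the upper bound for condition 6 and leaves implicit both the periodic extension and the lower bound from Lemma~\ref{lemmaCarD} with the time restriction (\ref{restrT}) that keeps the iterates away from the singularity of $\tilde v$ at $w=0$.
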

\begin{proof}
Equation $(\ref{H})$ defines the Lax Friedrichs scheme from Theorem $\ref{Thm45}$ for $(\ref{csvlaw3})$.
We show that the conditions of Theorem $\ref{Thm45}$ hold. 
Condition 1 is fulfilled by the boundedness of $w_0$ and $(\ref{Bv-cond})$. 
The function $\hat{c}$ is bounded per assumption and chosen to be Lipschitz continuous and thus, belongs to the functions of bounded variation (condition 2). 
The flux function is infinitely often differentiable in both arguments for $w>0$ such that condition 3 and 4 are fulfilled.
The second derivative with respect to the second argument is calculated by
$\frac{\partial^2}{\partial w^2} f(c,w) = - \frac{2c}{w^3} \neq 0$ 
for any $w \in [1,K], ~K >1$ and any strictly positive capacity functions and therefore condition 5 is met. 
Using Lemma $\ref{boundednessLemma}$ we obtain uniform bounds for the Lax Friedrichs approximation which directly gives condition 6. 
The CFL condition is forced to hold by an additional assumption in the theorem such that also condition 7 is satisfied. 
\end{proof}

\subsection{Convergence of the Local Density Functions}
After having found a convergent series of inverse local density functions to a limit $w$ and we have that $w$ is bounded from below by 1 due to Lemma $\ref{lemmaCarD}$ for sufficiently small time horizons $T>0$, we can conclude that for $t \in [0,T]$ there exists a limit function $\tilde{\rho}$ defined by
\begin{equation}
\label{rhoHut}
\hat{\rho}(y,t) = \frac{1}{w(y,t)}.
\end{equation}
This formulation still is given in Lagrangian coordinates. To convert the coordinates backwards and show that the limit function suits to the solution of the initial conservation law $(\ref{makroProblem})$, some computations are left to be examined.

One can show that the grid functions $x_i(t_j)$ can be used to construct a sequence of function that converges to a continuous function that returns the position of the vehicle with infinitesimal number $y \in [0,1]$ at time $t$ if $N L(N) = 1$.  Out of the initial vehicle position grid function, we define a piecewise linear function of Eulerian coordinates which we use for the limit process. For $y_{i-\frac{1}{2}} \in [0,1]$ set 
\begin{equation}
\label{constrX1}
\tilde{X}^{(N)} (y_{i-\frac{1}{2}},t_p) = \tilde{x}^{(N)}_i (t_p). 
\end{equation}
Between those grid points we choose a linear interpolation in both components by
\begin{align}
\begin{split}
\label{constrX2}
\tilde{X}^{(N)}(\cdot,t) =& \frac{1}{\Delta t} \Big( (t_{p+1} - t) \tilde{X}^{(N)}(\cdot,t_p) + (t-t_p) \tilde{X}^{(N)}(\cdot,t_{p+1}) \Big),~~t \in [t_p, t_{p+1}] \\ 
\tilde{X}^{(N)}(y,\cdot) =& \frac{1}{L(N)} \Big((y_{i+\frac{1}{2}} - y) \tilde{X}^{(N)}(y_{i-\frac{1}{2}}, \cdot) + (y - y_{i-\frac{1}{2}}) \tilde{X}^{(N)}(y_{i + \frac{1}{2}},\cdot) \Big),~y \in [y_{i-\frac{1}{2}}, y_{i+\frac{1}{2}}].
\end{split}
\end{align}

\begin{lemma}
\label{cvg_x}
There exists a continuous function $\hat{x}: [0,1] \times \mathbb{R}_+ \rightarrow \mathbb{R}$ such that for a subsequence of $(\tilde{X}^{(N)})_{N \in \mathbb{N}}$ defined as in $(\ref{constrX1})$ and $(\ref{constrX2})$, $\tilde{X}^{(N)} \rightarrow \hat{x}$ uniformly as $N \rightarrow \infty$. 
\end{lemma}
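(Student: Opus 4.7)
The plan is to apply the Arzelà--Ascoli theorem on compact subsets of $[0,1]\times\mathbb{R}_+$, so I need to establish uniform boundedness and equicontinuity of the family $(\tilde{X}^{(N)})_{N\in\mathbb{N}}$, and then extract a uniformly convergent subsequence by a standard diagonal argument on an exhausting sequence of intervals $[0,1]\times[0,T_k]$.

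\textbf{Uniform boundedness.} All grid values $\tilde{X}^{(N)}(y_{i-\frac12},t_p)=\tilde{x}_i^{(N)}(t_p)$ lie in the road interval $[a,b]$ since the microscopic dynamics $(\ref{x_1Tilde})$ preserve periodicity; the piecewise linear interpolation in $(\ref{constrX2})$ preserves this bound, so $\|\tilde{X}^{(N)}\|_\infty\le\max(|a|,|b|)$ uniformly in $N$.

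\textbf{Equicontinuity in $y$.} On the cell $[y_{i-\frac12},y_{i+\frac12}]$ the definition $(\ref{constrX2})$ gives slope
\begin{equation*}
\frac{\tilde{X}^{(N)}(y_{i+\frac12},t)-\tilde{X}^{(N)}(y_{i-\frac12},t)}{L(N)}
=\frac{\tilde{x}_{i+1}^{(N)}(t)-\tilde{x}_i^{(N)}(t)}{L(N)} = w_i^{(N)}(t),
\end{equation*}
so $|\partial_y \tilde{X}^{(N)}|\le \|w^{(N)}(t)\|_\infty$. By Lemma \ref{boundednessLemma}, applied iteratively along the time grid, $\|w^{(N)}(t)\|_\infty\le K+TL_c$ for all $t\in[0,T]$, uniformly in $N$. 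Hence $\tilde{X}^{(N)}(\cdot,t)$ is $(K+TL_c)$-Lipschitz in $y$ uniformly in $N$ and in $t\in[0,T]$.

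\textbf{Equicontinuity in $t$.} Between consecutive time nodes the Euler step $(\ref{x_1Tilde})$ gives
\begin{equation*}
|\tilde{x}_i^{(N)}(t_{p+1})-\tilde{x}_i^{(N)}(t_p)|
\le \Delta t\,\|c_{road}\|_\infty\,\|\tilde{c}_{ac}\|_\infty\,v_{\max},
\end{equation*}
and the linear interpolation in time in $(\ref{constrX2})$ extends this to the whole cell $[t_p,t_{p+1}]$. The same bound then transfers to $\tilde{X}^{(N)}(y,\cdot)$ for every $y\in[0,1]$ by the linearity of the $y$-interpolation, giving a Lipschitz constant in $t$ that depends only on $\|c_{road}\|_\infty$ and $v_{\max}$, independent of $N$.

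\textbf{Passage to the limit.} The uniform bounds on both Lipschitz constants imply that the family $\{\tilde{X}^{(N)}\}$ is equicontinuous on each $[0,1]\times[0,T]$. Together with uniform boundedness, Arzelà--Ascoli produces a subsequence converging uniformly on $[0,1]\times[0,T]$ to a continuous limit $\hat{x}_T$. A standard diagonal extraction over $T=1,2,3,\ldots$ yields a single subsequence converging uniformly on compact sets to a continuous $\hat{x}:[0,1]\times\mathbb{R}_+\to\mathbb{R}$.

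The main obstacle I expect is making the uniform-in-$t$ bound on $\|w^{(N)}(t)\|_\infty$ robust: Lemma \ref{boundednessLemma} yields growth at rate $L_c$, which is fine on any finite horizon but has to be tracked carefully to close the equicontinuity estimate simultaneously for all grid sizes $\Delta t$ satisfying the CFL condition $(\ref{diskreteCFL})$. A secondary subtlety is the periodic boundary: the interpolation between the last Lagrangian node $y_{N-\frac12}$ and the endpoint $1$ wraps $\tilde{x}_N^{(N)}$ back to $\tilde{x}_1^{(N)}+(b-a)$, so one must shift by $b-a$ in this cell to keep the slope $w_N^{(N)}$ and stay within the same Lipschitz envelope—otherwise the jump would break equicontinuity.
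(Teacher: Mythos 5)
Your proposal is correct and follows essentially the same route as the paper: uniform boundedness of the positions, uniform Lipschitz bounds in $y$ (via the bound on $w^{(N)}$ from Lemma \ref{boundednessLemma}) and in $t$ (via the bounded velocities), and then Arzel\`a--Ascoli. Your write-up is in fact more careful than the paper's three-sentence argument, in particular in making the $y$-slope identification with $w_i^{(N)}$ explicit and in handling the non-compactness of $\mathbb{R}_+$ by a diagonal extraction.
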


\begin{proof}
We know from $(\ref{Bv-cond})$ and Lemma $\ref{boundednessLemma}$ that all inverse local densities $w_i^{(N)}(t)$ are bounded for any $N \in \mathbb{N}$ and finite $t>0$ using the Lax-Friedrichs approximation from $(\ref{H})$. Therefore $\tilde{x}^{(N)}_i (t)$ are also bounded and due to its piecewise linear construction $\tilde{X}^{(N)}$ also stays bounded and is uniformly Lipschitz continuous in both arguments. This allows for the theorem of Arzela-Ascoli which ensures the convergence of a subsequence in $C([0,1] \times \mathbb{R}_+)$ for $N \rightarrow \infty$ against some limit function $\hat{x}$ which is also Lipschitz continuous. \qedhere
\end{proof}

As a next step, we try to show $L^1$-convergence of the constructed sequence of local densities against the function defined in $(\ref{rhoHut})$. For the vehicle positions we use the sequences from $(\ref{constrX1})$ and $(\ref{constrX2})$. 
To make notation a little easier, we assume in any step the rightmost vehicle to be located at the right border of the road interval $b$. In reality this might not always be true, but by a simple shift we obtain the actual location. Note that the leftmost vehicle will not be located at the leftmost point on the road for a finite number of vehicles. In the limit the vehicle with infinitesimal number 0 then is positioned at $x=a$.
\begin{align*}
\hat{x} (0,t) &= a, ~~~ \tilde{X}^{(N)}(1,t) = \hat{x} (1,t) = b,~~~ \forall N \in \mathbb{N}.
\end{align*}
Using Lemma $\ref{cvg_x}$ for a subsequence we get
\begin{equation}
\label{cvg_border}
\tilde{X}^{(N)} (0,t) \longrightarrow \hat{x}(0,t) = a
\end{equation}
uniformly as $N \rightarrow \infty$.\\
Note that $\tilde{X}^{(N)}$ can be inverted in $y$, since $\tilde{X}^{(N)}$ is constructed to be piecewise linear, continuous and strictly monotone increasing in $y$. The inverse will be denoted by $\tilde{Y}^{(N)} (x,t)$. 

Passing to a subsequence Lemma $\ref{cvg_x}$ showed the uniform convergence of $(\tilde{X}^{(N)}(t,\cdot))_{N \in \mathbb{N}}$. For fixed $t \in [0,T]$, the functions of the sequence were defined Lipschitz continuous and also injective since we assumed the indices of the cars to be ordered according to their position on the road. Thus, $\tilde{X}^{(N)}$ is strictly monotone in $y$. The same argument applies to the limit function. A result from analysis states that for a sequence of real, injective functions, which converges uniformly against an injective function $f$, also the sequence of the inverse functions converges uniformly to $f^{-1}$ (see e.g. \cite{InverseFolge}). This statement yields the uniform convergence of a subsequence of $(\tilde{Y}^{(N)}(\cdot ,t))_{N \in \mathbb{N}}$ to some function $\hat{y}(\cdot,t)$.

Now we collected all necessary results for the sequences for the position of the vehicles in both Eulerian and Lagrangian coordinates. Let us denote the density in Eulerian coordinates by
\begin{equation}
\label{rhoEN}
\tilde{\rho}^{E,(N)}(x,t) = \frac{1}{w^{(N)}(\tilde{Y}^{(N)}(x,t), t)}.
\end{equation}
In the Lagrangian coordinates we define
\begin{equation*}
\hat{\rho}^{(N)}(y,t) = \frac{1}{w^{(N)}(y,t)}.
\end{equation*}
As a last step, define 
\begin{equation*}
\tilde{\rho}^E (x,t) = \hat{\rho}(\hat{y}(x,t),t),
\end{equation*}
where $\hat{y}$ denotes the inverse of the limit function $\hat{x}$ with respect to the spatial variable.
This enables us to show $L^1([a,b])$-convergence of the Eulerian density function.

\begin{lemma}
For $\tilde{\rho}^{E,(N)}(\cdot,t)$ there exists a function $\tilde{\rho}^E(\cdot,t): [a,b] \rightarrow \mathbb{R}$ which is the limit of $\tilde{\rho}^{E,(N)}(\cdot,t)$ for $N \rightarrow \infty$ in $L^1([a,b])$.
\end{lemma}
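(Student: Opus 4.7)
The plan is to estimate $\|\tilde\rho^{E,(N)}(\cdot,t)-\tilde\rho^{E}(\cdot,t)\|_{L^1([a,b])}$ by a triangle inequality that separates the convergence of the inverse density $w^{(N)}\to w$ from the convergence of the Lagrangian change of variables $\tilde Y^{(N)}\to \hat y$. Concretely, I would insert the intermediate quantity $1/w(\tilde Y^{(N)}(x,t),t)$ to get
\begin{equation*}
\bigl|\tilde\rho^{E,(N)}(x,t)-\tilde\rho^{E}(x,t)\bigr|
\;\le\;\underbrace{\Bigl|\tfrac{1}{w^{(N)}(\tilde Y^{(N)}(x,t),t)}-\tfrac{1}{w(\tilde Y^{(N)}(x,t),t)}\Bigr|}_{=: I^{(N)}(x,t)}
\;+\;\underbrace{\Bigl|\tfrac{1}{w(\tilde Y^{(N)}(x,t),t)}-\tfrac{1}{w(\hat y(x,t),t)}\Bigr|}_{=: II^{(N)}(x,t)}.
\end{equation*}
Both $w^{(N)}$ and $w$ are bounded below by $1$ (Lemma \ref{lemmaCarD} for the approximations and passage to the limit for $w$) and above by a constant $K'$ (Lemma \ref{boundednessLemma}), which legitimises the reciprocals and provides the uniform estimate $I^{(N)}(x,t)\le|w^{(N)}-w|(\tilde Y^{(N)}(x,t),t)$.

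For the first piece I would carry out the change of variables $y=\tilde Y^{(N)}(x,t)$, whose Jacobian is $\partial_y \tilde X^{(N)}(y,t)=w^{(N)}(y,t)\le K'$, giving
\begin{equation*}
\int_a^b I^{(N)}(x,t)\,dx \;\le\; \int_0^1 |w^{(N)}(y,t)-w(y,t)|\,w^{(N)}(y,t)\,dy \;\le\; K'\,\|w^{(N)}(\cdot,t)-w(\cdot,t)\|_{L^1([0,1])},
\end{equation*}
which vanishes (along the same subsequence, for a.e.~$t$) thanks to the $L^p_{loc}$ convergence granted by Theorem \ref{corWeaksol}.

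For the second piece the difficulty is that $w$ is only an element of $L^\infty$, so pointwise evaluation at the perturbed argument $\tilde Y^{(N)}(x,t)$ cannot be controlled directly by the uniform convergence $\tilde Y^{(N)}\to \hat y$. I would resolve this by a density argument: for any $\varepsilon>0$, choose $\phi\in C([0,1])$ with $\|1/w(\cdot,t)-\phi\|_{L^1([0,1])}<\varepsilon$, and split
\begin{equation*}
II^{(N)}(x,t) \;\le\; \bigl|\tfrac1w-\phi\bigr|(\tilde Y^{(N)}(x,t))+\bigl|\phi(\tilde Y^{(N)}(x,t))-\phi(\hat y(x,t))\bigr|+\bigl|\phi-\tfrac1w\bigr|(\hat y(x,t)).
\end{equation*}
The two outer terms, after the same change of variables as above (with uniformly bounded Jacobian $w^{(N)}$ and, in the limit, $w$), are each bounded by $K'\varepsilon$. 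The middle term tends to zero as $N\to\infty$ by uniform continuity of $\phi$ on $[0,1]$ combined with the uniform convergence $\tilde Y^{(N)}(\cdot,t)\to \hat y(\cdot,t)$ established right before the statement. Sending first $N\to\infty$ and then $\varepsilon\to 0$ shows $\int_a^b II^{(N)}(x,t)\,dx\to 0$, and combining the two estimates yields the claim.

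The main obstacle is the second piece: because $w$ has no a priori regularity in $y$, one cannot pass to the limit in $w(\tilde Y^{(N)}(\cdot,t),t)$ by plain continuity. The density-in-$L^1$ argument together with the uniformly bounded Jacobian of the Eulerian-to-Lagrangian change of variables is what makes it work; the other steps (reciprocals are Lipschitz away from zero, $L^p_{loc}$ convergence gives $L^1([0,1])$ convergence of $w^{(N)}(\cdot,t)$ for a.e.~$t$) are essentially bookkeeping.
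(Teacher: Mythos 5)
Your decomposition is exactly the paper's: insert $\hat{\rho}(\tilde{Y}^{(N)}(x,t),t)$ as the intermediate term, handle one piece by the $L^1$-convergence $w^{(N)}\to w$ from Theorem \ref{corWeaksol} and the other by the uniform convergence $\tilde{Y}^{(N)}\to\hat{y}$, so this is essentially the same proof; the only bookkeeping the paper adds is splitting off the shrinking strip $[a,\tilde{X}^{(N)}(0,t)]$ where $\tilde{Y}^{(N)}$ is not yet defined, whose contribution vanishes since both densities are bounded by $1$. Your density-in-$L^1$ argument for the second piece is in fact more careful than the paper's, which passes from uniform convergence of the arguments to a.e.\ convergence of $\hat{\rho}(\tilde{Y}^{(N)}(x,t),t)$ without justifying the continuity of $\hat{\rho}$ that this implicitly requires.
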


\begin{proof}
\begin{align}
\| \tilde{\rho}^E (\cdot,t) - \tilde{\rho}^{E,(N)}(\cdot,t)\|_{L^1} &= \| \hat{\rho}(\hat{y}(\cdot,t),t) - \hat{\rho}^{(N)} (\tilde{Y}^{(N)}(\cdot,t),t) \|_{L^1}  \nonumber\\
& \leq \| \hat{\rho}(\hat{y}(\cdot,t),t) - \hat{\rho}(\tilde{Y}^{(N)}(\cdot,t),t) \|_{L^1} \nonumber\\
&~~ + \| \hat{\rho}(\tilde{Y}^{(N)}(\cdot,t),t) - \hat{\rho}^{(N)}(\tilde{Y}^{(N)}(\cdot,t),t) \|_{L^1}  \nonumber\\
& = \int_a^{\tilde{X}^{(N)}(0,t)} | \hat{\rho}(\hat{y}(t,x),t) - \hat{\rho}(\tilde{Y}^{(N)}(x,t),t) | dx \label{int1}\\
&~~+ \int_{\tilde{X}^{(N)}(0,t)}^b | \hat{\rho}(\hat{y}(x,t),t) - \hat{\rho}(\tilde{Y}^{(N)}(x,t),t) | dx \label{int2}\\
& ~~+ \int_a^{\tilde{X}^{(N)}(0,t)} | \hat{\rho}(\tilde{Y}^{(N)}(x,t),t) - \hat{\rho}^{(N)}(\tilde{Y}^{(N)}(x,t),t) | dx \label{int3}\\
&~~+ \int_{\tilde{X}^{(N)}(0,t)}^b | \hat{\rho}(\tilde{Y}^{(N)}(x,t),t) - \hat{\rho}^{(N)}(\tilde{Y}^{(N)}(x,t),t) | dx \label{int4}\\
&\longrightarrow 0, \nonumber
\end{align}
as $N \rightarrow \infty$. 

Both, $\hat{\rho}$ and $\hat{\rho}^{(N)}$ are bounded by 1. By $(\ref{cvg_border})$ the upper borders in $(\ref{int1})$ and $(\ref{int3})$ converge to the lower borders of the integral and thus the integrals go to 0 as $N \rightarrow \infty$. \\
In $(\ref{int2})$ the integrand is bounded by 2. Using Lemma $\ref{cvg_x}$, the arguments of $\hat{\rho}$ converge uniformly thus the integrand goes to zero almost everywhere. By the dominated convergence theorem the whole integral goes to 0. In $(\ref{int4})$ we used Theorem $\ref{Thm45}$ and the boundedness of the inverse local density function that gives us $L^1$-convergence of $\hat{\rho}^{(N)}$ to $\hat{\rho}$. Thus this integral also goes to 0 for $N \rightarrow \infty$. We showed that all four integrals tend to 0 for $N \rightarrow \infty$ which finishes the proof.
\end{proof}

\subsection{Equivalence of the Solutions of the Cauchy Problems}
\label{sec: EqSol}
So far we constructed a limit function of the local densities and we found a weak solution to the Cauchy problem of local inverse densities in $(\ref{cpIv})$. Now, Theorem $\ref{eqvSOL}$ claims a connection of the weak solutions of the Cauchy problems in $(\ref{makroProblem})$ and $(\ref{cpIv})$. 

\begin{theorem}[Equivalence of Weak Solutions]
\label{eqvSOL}
Let $\tilde{u} \in L^{\infty}(\mathbb{R} \times \mathbb{R}_+)$ be a weak solution to
\begin{equation*}
\Big(\frac{1}{\tilde{u}}\Big)_t - \big(\tilde{a}\tilde{v}(\tilde{u})\big)_y = 0,~~~~ \tilde{u}(y,0)=\tilde{u}_0(y).
\end{equation*}
Let the map $T$ be defined by
\[
      T: \left \lbrace \begin{array}{ll} \mathbb{R} \times \mathbb{R}_+ \rightarrow Im(y) \times \mathbb{R}_+ \\
          (x,t) \mapsto (y(x,t),\bar{t}(x,t)), \end{array}\right.
   \]
   where $y(x,t) = \int_{x(t)}^x \rho (z,t)dz$ and $\bar{t}(x,t) = t$.
 Let $x(t)$ be a vehicle trajectory satisfying $\frac{\partial}{\partial t}x(t) = a(x(t)) v(\rho(x(t),t))$. 
 Assume that $0 < \delta \leq \rho(x,t) \leq 1$ for all $(x,t) \in \mathbb{R} \times \mathbb{R}_+$ and that $v(\rho) > 0$ for $\rho > 0$ and $v \in L^{\infty}((\delta, 1])$ is Lipschitz continuous. Also assume $a \in L^{\infty}(\mathbb{R})$, being Lipschitz continuous and $a>0$.
Define $\tilde{a}=a \circ T^{-1}$, $\tilde{v} = v \circ T^{-1}$ and $\tilde{u_0} = u_0 \circ T^{-1}$.
Then $u= \tilde{u} \circ T$ is a weak solution to
\begin{equation*}
u_t + (a(x)v(u)u)_x = 0,~~~~ u(x,0) = u_0(x).
\end{equation*}
\end{theorem}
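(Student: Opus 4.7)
The strategy is to push the Lagrangian weak formulation through the change of variables $T$, using pullbacks of Eulerian test functions. First I would verify that $T$ is a bi-Lipschitz homeomorphism: the bound $0<\delta\leq\rho\leq 1$ gives $\partial_x y=\rho\in[\delta,1]$, so $y(\cdot,t)$ is bi-Lipschitz in $x$, and boundedness of $a$ and $v$ makes $x(t)$ Lipschitz in $t$. Hence $T^{-1}(y,t)=(X(y,t),t)$ has $X_y=1/\rho=1/u$ almost everywhere, and for any Eulerian test function $\phi\in C_c^\infty(\mathbb{R}\times\mathbb{R}_+)$ the pullback $\tilde\phi:=\phi\circ T^{-1}$ is Lipschitz with compact support, hence admissible in the Lagrangian weak formulation.

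The core step is to insert $\tilde\phi$ into the Lagrangian weak identity and use the chain rule $\tilde\phi_y=\phi_x/u$, $\tilde\phi_t=\phi_t+\phi_x\,X_t|_y$, together with the change of variables $dy\,dt=u\,dx\,dt$ and $dy|_{t=0}=u_0\,dx$. After the routine cancellation coming from $\int\!\int\phi_t\,dx\,dt+\int\phi(\cdot,0)\,dx=0$ (integration by parts in $t$ with compact support), the Lagrangian weak hypothesis collapses to
\[
\int_0^\infty\!\!\int_{\mathbb{R}}\phi_x\bigl(X_t|_y-a(x)\,v(u)\bigr)\,dx\,dt=0 \qquad\text{for every }\phi\in C_c^\infty,
\]
so that $X_t|_y-a\,v(u)$ is $x$-independent in the distributional sense and thus equals some $g(t)$. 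Evaluating along the base trajectory $y=0$, where $X(0,t)=x(t)$ and $X_t(0,t)=x'(t)=a(x(t))v(\rho(x(t),t))$ by the theorem's standing assumption, forces $g\equiv 0$ and yields the kinematic identity $X_t|_y=\tilde a\,\tilde v(\tilde u)$ a.e. With this identity the Eulerian flux integrand collapses as $u\,\phi_t+a(x)\,u\,v(u)\,\phi_x=u\,\tilde\phi_t$; a second application of the change of variables and an integration by parts in $t$ turn the Eulerian weak-form LHS into $-\int\tilde\phi(y,0)\,dy+\int u_0(x)\phi(x,0)\,dx=0$, which is exactly the Eulerian weak formulation of $u_t+(a(x)v(u)u)_x=0$ with initial datum $u_0$.

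The main obstacle is carrying out these manipulations rigorously at the weak-solution level. Since $\tilde u$ is only in $L^\infty$ and $T^{-1}$ is only Lipschitz, chain-rule identities hold only a.e.\ via Rademacher's theorem, and the kinematic relation $X_t|_y=\tilde a\,\tilde v(\tilde u)$ cannot be obtained by directly differentiating $y=\int_{x(t)}^x\rho\,dz$ in time, because that would require pointwise access to $\rho_t$. The plan above sidesteps this by extracting the identity from the Lagrangian weak hypothesis itself, with the prescribed ODE for the base vehicle trajectory $x(t)$ pinning down the $x$-independent ambiguity that cannot otherwise be controlled. Alternatively, one could first regularise $\tilde u$ by convolution, run the classical chain-rule computation for the smooth approximant, and pass to the limit using the $L^1_{\rm loc}$ stability of weak solutions afforded by Theorem $\ref{Thm45}$.
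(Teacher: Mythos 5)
Your proposal is correct in substance and reaches the result by a genuinely different organization of the same Wagner-type change-of-variables argument. The paper's proof first establishes (by mollifying Lipschitz test functions and pulling them back through the bi-Lipschitz map $T$) a general correspondence: a Lagrangian law $\tilde D_t-(\tilde a\tilde G)_y=0$ transforms into the Eulerian law $(\rho D)_t+(\rho D a v-aG)_x=0$. It then observes that the hypothesis (the pair $D=1/\rho$, $G=v$) maps to the trivial identity $1_t+0_x=0$ (conservation of volume), and, citing Wagner, reads off mass conservation $\rho_t+(a v(\rho)\rho)_x=0$ from the \emph{other} trivial pair $D=1$, $G=0$. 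In that route the kinematic relation $X_t=av$ is baked into the change of variables via $\tilde\phi_t\circ T=\phi_t+\phi_x\,av$ and is never derived from the hypothesis. You instead extract precisely that relation \emph{from} the Lagrangian weak formulation --- after the change of variables the $\phi_t$ and initial-data terms cancel by the fundamental theorem of calculus, leaving $\int_0^\infty\!\int_{\mathbb{R}}\phi_x\bigl(X_t-a v(u)\bigr)\,dx\,dt=0$, hence $X_t-av(u)=g(t)$ --- and then verify the Eulerian weak form by a second change of variables. This makes the role of the hypothesis transparent where the paper leaves it implicit, at the cost of one extra step. The one place you should tighten the argument is the conclusion $g\equiv 0$: $X_t-av(u)$ is an $L^\infty$ function equal to $g(t)$ only almost everywhere, and the base trajectory $\{y=0\}$ is a null set, so ``evaluating along $y=0$'' is not immediate. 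A clean repair is to write $X(y,t)=x(t)+\int_0^y w(\eta,t)\,d\eta$ with $w=1/\tilde u$ and use the Lagrangian equation to get $\partial_t\int_0^y w\,d\eta=\tilde a\tilde v(\tilde u)(y,t)-\tilde a\tilde v(\tilde u)(0,t)$ weakly; the boundary term cancels $x'(t)$ by the trajectory ODE, giving $X_t=\tilde a\tilde v(\tilde u)$ without pointwise evaluation of an a.e.-defined function. (A trace of $\tilde a\tilde v(\tilde u)$ at $y=0$ is still needed, but this difficulty is already latent in the theorem's own assumption that $x'(t)=a(x(t))v(\rho(x(t),t))$ for an $L^\infty$ density, and the paper does not address it either.) With that repair your two-step argument is sound and arguably more self-contained than the published proof.
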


\begin{proof}
We consider a more general framework of a Cauchy problem in Lagrangian coordinates by $\tilde{D}_t - (\tilde{a}\tilde{G})_y = 0$ with initial condition $\tilde{D}(x,0)=\tilde{D}_0(x)$. 
Define $\tilde{D} = D \circ T^{-1}$ and $\tilde{G} = G \circ T^{-1}$. Consider a test function $\tilde{\phi} \in C_0^{0,1}(\mathbb{R} \times \mathbb{R}_+)$. Note that the convolution of such a test function with a mollifier $\lambda_\varepsilon$ creates a uniformly convergent sequence $\tilde{(\phi_\varepsilon)}_{\varepsilon>0}$ against a function in $C_0^\infty(\mathbb{R})$. 
Also the weak derivatives of the test function converge in $L^1(\mathbb{R})$ as $\varepsilon \rightarrow 0$ such that
\begin{align*}
&\int_0^\infty \int_\mathbb{R} \tilde{(\phi_\varepsilon)}_t \tilde{D} - \tilde{(\phi_\varepsilon)}_y(\tilde{a} \tilde{G}) dydt + \int_\mathbb{R}  \tilde{(\phi_\varepsilon)}(y,0) \tilde{D}_0(y)dy\\
&\overset{\varepsilon \rightarrow 0}{\longrightarrow} \int_0^\infty \int_\mathbb{R} \tilde{\phi}_t \tilde{D} - \tilde{\phi}_y \tilde{a} \tilde{G}dydt + \int_\mathbb{R}  \tilde{\phi}(y,0) \tilde{D}_0(y)dy.
\end{align*}
Thus, the weak formulation 
\begin{equation*}
\int_0^\infty \int_\mathbb{R} \tilde{\phi}_t \tilde{D} - \tilde{\phi}_y \tilde{a} \tilde{G}dydt + \int_\mathbb{R}  \tilde{\phi}(x,0) \tilde{D}_0(y)dy = 0
\end{equation*}
for a Cauchy problem also holds for Lipschitz continuous test functions $\tilde{\phi}$ with compact support.

Define $\tilde{\phi} = \phi \circ T^{-1}$. One can show that $T$ is a bi-Lipschitz homeomorphism and therefore $\phi$ is also a compact supported Lipschitz continuous test function on $\mathbb{R} \times \mathbb{R}_+$. Using a change of variables we get 
\begin{align*}
0 &= \int_0^\infty \int_\mathbb{R} \tilde{\phi}_t \tilde{D} - \tilde{\phi}_y \tilde{a} \tilde{G}dydt + \int_\mathbb{R}  \tilde{\phi}(y,0) \tilde{D}_0(y)dy\\
&= \int_0^\infty \int_\mathbb{R} \big((\tilde{\phi}_t \circ T) (\tilde{D} \circ T) - (\tilde{\phi}_y \circ T) ((\tilde{a} \tilde{G})\circ T)\big) \rho dxdt + \int_\mathbb{R}  \tilde{\phi}\circ(T(x,0)) \tilde{D}_0(T(y,0))dx\\
&= \int_0^\infty \int_\mathbb{R} \phi_t \rho D + \phi_x(\rho D av - aG) dx dt + \int_\mathbb{R}\phi(x,0) D_0(x) \rho(x) dx
\end{align*}
which is just the weak formulation for a solution of a Cauchy problem of the form
\begin{equation*}
(\rho D)_t + (\rho D av - aG)_x = 0, ~~~~ D(x,0) = D_0(x).
\end{equation*}
Inserting $D= \frac{1}{\rho}$ and $G=v$ leads to the trivial relation $1_t + 0_x = 0$. Using Lagrangian coordinates, the conservation law describes conservation of volume, where instead in Eulerian coordinates we have conservation of mass. As described in \cite{Wagner1}, to obtain conservation of mass out of conservation of volume we set $D=1$ and $G=0$. Then, we end up with the proposed relation
\begin{equation*}
\rho_t + (a(x)v(\rho) \rho)_x = 0,
\end{equation*}
where $\rho = \tilde{\rho} \circ T$. 
\end{proof}

All together we have shown the following theorem.
\begin{theorem}
Let $T>0$ and $a<b$. Consider the Cauchy problem 
\begin{align}
\begin{split}
\label{CSProb}
\rho_t + (F_{ac}(x,t,\rho))_x &= 0,~~~ x \in [a,b], ~ t \in [0,T]\\
\rho(x,0) &= \rho_0(x).
\end{split}
\end{align} 
Set $F_{ac}(x,t,\rho) = (c(x,t) f(\rho))$ with $f(\rho) = \rho v(\rho)$ and where the velocity functions is $v(\rho)=1-\rho$. Let $c: \mathbb{R} \times [0,T] \rightarrow [0,1]$ be a Lipschitz continuous and uniformly bounded capacity function. It is given by $c(x,t) = c_{ac}(x,t) c_{road} (x)$. Assume that there exist $0<\underline{c}, \bar{c}< \infty$ such that $c(x,t) \in [\underline{c}, \bar{c}]~~ \forall (x,t) \in [a,b] \times [0,T]$. 
Let the microscopic model be approximated by $(\ref{approxLxFstyle})$ and $(\ref{approxLxFstyle1})$. Assume that the initial vehicle positions are chosen such that $(\ref{Bv-cond})$ and $(\ref{carDistance})$ holds. For the time horizon $T$ we impose $(\ref{restrT})$.
Define $\tilde{\rho}^{E,(N)}$ as in $(\ref{rhoEN})$ and the sequence of inverse densities $((w_i^{(N)})_{i \in \lbrace 1,...,N \rbrace})_{N \in \mathbb{N}}$ as in $(\ref{H})$ and $(\ref{H2})$. Denote $L_{F_{ac}}$ the Lipschitz constant of the flux with respect to the second argument. Let $L(N) = \frac{1}{N}$ and assume that equidistant space and time grids are chosen such that 
\begin{equation*}
\frac{\Delta t}{\Delta x} L_{F_{ac}} < 1.
\end{equation*}
For $N \rightarrow \infty$ and $\Delta t, \Delta x \rightarrow 0$ $(\ref{rhoEN})$ converges in $C([0,T],L^1([a,b]))$ to a weak solution of the Cauchy problem $(\ref{CSProb})$.  
\end{theorem}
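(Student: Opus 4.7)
The plan is to assemble the ingredients built up in Section \ref{sec:ConvergenceMikroMakro} into a three-stage chain of convergence, with the inverse local density $w$ in Lagrangian coordinates serving as the pivot. First, I would verify that the hypotheses of Theorem \ref{corWeaksol} are met under the assumptions of the theorem: the bounds on $c(x,t)=c_{ac}(x,t)c_{road}(x)$ together with its Lipschitz continuity produce an admissible smoothed flux $f(\hat c,w)=\hat c\,\tilde v(w)$; conditions (\ref{Bv-cond}) and (\ref{carDistance}) are assumed; the CFL bound $\frac{\Delta t}{\Delta x}L_{F_{ac}}<1$ together with $L(N)=\tfrac{1}{N}$ translates into the CFL bound for the Lagrangian scheme in (\ref{H})--(\ref{H2}). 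Applying Theorem \ref{corWeaksol} would yield, along a subsequence, $w^{(N)}_\Delta\to w$ in $L^p_{loc}([0,1]\times\mathbb{R}_+)$, where $w$ is a weak solution of (\ref{cpIv}). Combining Lemma \ref{boundednessLemma} and Lemma \ref{lemmaCarD} with the time restriction (\ref{restrT}) gives the two-sided bound $1\le w(y,t)\le K$ on $[0,1]\times[0,T]$, so that $\hat\rho:=1/w$ is well defined and $\hat\rho\in[\delta,1]$ with $\delta>0$.

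Second, I would pass to Eulerian coordinates via the vehicle-position interpolants (\ref{constrX1})--(\ref{constrX2}). Lemma \ref{cvg_x} gives a subsequence with $\tilde X^{(N)}\to\hat x$ uniformly; since each $\tilde X^{(N)}(\,\cdot\,,t)$ is strictly monotone and Lipschitz, the standard uniform-convergence-of-inverses argument invoked before the theorem yields $\tilde Y^{(N)}\to\hat y$ uniformly. The $L^1([a,b])$-convergence lemma then gives, for every fixed $t\in[0,T]$,
\begin{equation*}
\tilde\rho^{E,(N)}(\cdot,t)\;\longrightarrow\;\tilde\rho^{E}(\cdot,t):=\hat\rho(\hat y(\cdot,t),t)\qquad\text{in }L^1([a,b]).
\end{equation*}

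Third, I would apply Theorem \ref{eqvSOL} to the pair $(w,\hat\rho)$: the bounds $\hat\rho\in[\delta,1]$, the Lipschitz continuity and positivity of $v(\rho)=1-\rho$ on $(\delta,1]$, and the Lipschitz continuity and positivity of $a(x)=c(x,t)$ assumed in the theorem are precisely the hypotheses needed. The conclusion is that $\tilde\rho^E=w^{-1}\circ T$ is a weak solution to the Eulerian Cauchy problem (\ref{CSProb}), which identifies the limit.

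The main obstacle is upgrading the pointwise-in-$t$ convergence in $L^1([a,b])$ established above to convergence in $C([0,T],L^1([a,b]))$, since Theorem \ref{corWeaksol} and the preceding lemmas only provide $L^1_{loc}$ and pointwise-in-$t$ $L^1$ convergence. For this I would prove uniform-in-$N$ equicontinuity in time, i.e.
\begin{equation*}
\|\tilde\rho^{E,(N)}(\cdot,t)-\tilde\rho^{E,(N)}(\cdot,s)\|_{L^1([a,b])}\le C\,|t-s|,
\end{equation*}
using that the vehicle velocities in (\ref{AusgangsODE}) are uniformly bounded by $\bar c\,v_{\max}$, which gives a Lipschitz-in-time bound on $\tilde X^{(N)}$ and hence on $\tilde Y^{(N)}$, combined with the Lax--Friedrichs TV bound in space inherited from (\ref{Bv-cond}) (this is the genuinely delicate point, since it requires propagating the initial BV control on $w_0^{(N)}$ through the scheme uniformly in $N$). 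Once this equicontinuity estimate is available, a Bochner-space Arzel\`a--Ascoli argument combined with the pointwise-in-$t$ convergence lifts the result to $C([0,T],L^1([a,b]))$, completing the proof.
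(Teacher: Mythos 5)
Your proposal follows essentially the same route as the paper, which offers no separate proof for this theorem but simply assembles Theorem \ref{corWeaksol}, Lemmas \ref{boundednessLemma}, \ref{lemmaCarD} and \ref{cvg_x}, the $L^1([a,b])$-convergence lemma, and Theorem \ref{eqvSOL} in exactly the order you describe. Your final paragraph on upgrading pointwise-in-$t$ $L^1$ convergence to $C([0,T],L^1([a,b]))$ via a uniform-in-$N$ time-equicontinuity estimate actually addresses a step the paper leaves implicit, so your write-up is, if anything, more complete than the paper's own.
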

We finally showed that the local density functions from the microscopic model with macroscopic accidents can be used to construct a convergent series against a weak solution of $(\ref{CSProb})$ for increasing the total number of vehicles to $\infty$ and keeping total mass constant. 

\section{Numerical Results}
\label{numRes}
In this section we investigate the convergence results from a numerical point of view and use simulations to show that also the local densities from the microscopic model converge to the traffic density of the corresponding macroscopic model.

\subsection{Numerical Treatment of the Microscopic Model}
Both traffic accident models are characterized by a deterministic evolution that is interrupted by stochastic jumps being either new accidents or removals of an accident. For step sizes $0<\Delta t \leq \frac{L}{v_{max}}$ between these jumps due to \begin{alignat*}{1}
&P\Big(x_i^{n+1} = x_i^n + \Delta t c_{road}(x_i^n)  c_{ac}(x_i^n)  v \bigg( \frac{L}{\Delta x_i^n} \bigg)\mid X^n\Big) = 1,~~ i=1,...,N-1 \label{x_1} 
\end{alignat*}
the vehicle positions can directly computed by 
\begin{equation*}
    x_i^{n+1} = x_i^n + \Delta t c_{road}(x_i^n)  c_{ac}(x_i^n)  v \Big( \frac{L}{\Delta x_i^n}\Big)
\end{equation*}
for vehicle $i=1,...,N-1$. For the last vehicle we set
\begin{equation*}
    x_N^{n+1} = x_N^n + \Delta t c_{road}(x_N^n)  c_{ac}(x_N^n)  v \Big( \frac{L}{x_{1}^n-x_N^n + (b-a)}\Big)
\end{equation*}
capturing periodic boundary conditions on a road $[a,b]$.
To avoid numerical difficulties the piecewise constant capacity function, which may have up to finitely many discontinuities, has to be smoothed. Simulations show that a linear smoothing is sufficient here. 
Since the accident components stay constant between the jumps we denote by
\begin{equation*}
    \phi_{\Delta t}(x^n,M^n, p^n, s^n,c^n,u^n,l^n) =  (x^{n+1},M^n,p^n,s^n,c^n,u^n,l^n)
\end{equation*}
the deterministic evolution of the stochastic process. 

For the simulation in chronological order we proceed as follows. After initializing the system in any time step using
\begin{alignat*}{2}
&P(u^{n+1} = 0 \mid X^n) &&= 1 - \Delta t \psi(X^n), \\
&P(u^{n+1} = 1 \mid X^n) &&=  \Delta t \psi(X^n) \frac{\lambda^A}{\lambda^A + M^n \lambda^R},~~~P(u^{n+1} = -1 \mid X^n) =  \Delta t \psi(X^n) \frac{M^n \lambda^R}{\lambda^A + M^n \lambda^R}
\end{alignat*}
the presence and kind of an event can be obtained by sampling two independently generated Bernoulli random variables. The first one chooses a new event with probability $p^n=\Delta t \psi(X^n)$ and the second one chooses a new accident with probability $q^n=\frac{\lambda^A}{\lambda^A + M^n \lambda^R}$. In case there is no new event we use $\phi_{\Delta t}$ to compute the deterministic evolution of the system. In case of a new accident another Bernoulli random variable with parameter $\beta$ is used to determine whether an accident of type 1 or type 2 is observed.

For a new accident we use the inverse transformation method to sample from the respective probability measure for the accident position. The measure for the accident positions due to high flux can be directly be computed using the piecewise constant shape of $h_{ac}$
\begin{equation*}
\mu_{t,ac}^F (B) = \frac{\int_B h_{ac}(x,t) dx}{C_F(t,ac)} = \ \frac{\sum_{i=1}^{N-1}\varepsilon_{x_i(t)}(B) c(x_i(t) \rho(x_i(t)) (1-\rho(x_i(t))) (x_{i+1}(t) - x_i(t))}{\sum_{i=1}^{N-1}c(x_i(t) \rho(x_i(t)) (1-\rho(x_i(t))) (x_{i+1}(t) - x_i(t))}
\end{equation*}
for $B \in \mathcal{B}([a,b])$. With adequate methods we sample accident sizes from $\mu^s$ and accident capacity reductions from $\mu^{cap}$. For the removal of an accident we uniformly choose one of the active accidents using a discrete inverse transformation method.
The algorithm is repeated in the next time step until the time horizon is reached.

\subsection{Numerical Treatment of the Macroscopic Model}
A Lax-Friedrichs scheme is used to solve the macroscopic model. For a better comparison with the microscopic model we assume a bounded road $[a,b]$ with periodic boundary conditions. We use step sizes $\Delta t, \Delta x >0$ such that the CFL condition is satisfied and $K \Delta x = b-a$ for some $K \in \mathbb{N}$. We set $x_{\frac{1}{2}} = a$, $x_{K+\frac{1}{2}}=b$ and define by 
\begin{equation*}
    \rho_i^0 = \frac{1}{\Delta x} \int_{x_{i-\frac{1}{2}}}^{x_{i+\frac{1}{2}}} \rho_0(x) dx
\end{equation*}
the initial cell means for $x_i = a - \frac{\Delta x}{2} + i \Delta x,~i \in \lbrace 1,...,K \rbrace$. For the flux function $f(c,\rho) = c \rho (1-\rho)$ the steps for the Lax-Friedrichs scheme are given by
\begin{align*}
    \rho_i^{j+1} =&~ \frac{\rho_{i+1}^j + \rho_{i-1}^j}{2} - \frac{\Delta t}{2\Delta x} \Big(f(c_{i+1}^j, \rho_{i+1}^j) - f(c_{i-1}^j, \rho_{i-1}^j) \Big),
\end{align*}
where $c_i^j = c(x_i,t_j), ~i  \in \lbrace 2,...,K-1 \rbrace$, $j \in \mathbb{N}_0$. For $\rho_1^{j+1}$ and $\rho_K^{j+1}$ we have to adapt for the periodic boundary conditions
\begin{align*}
    \rho_1^{j+1} =&~ \frac{\rho_{2}^j + \rho_{K}^j}{2} - \frac{\Delta t}{2\Delta x} \Big(f(c_{2}^j, \rho_{2}^j) - f(c_{K}^j, \rho_{K}^j) \Big) \\
    \rho_K^{j+1} =&~ \frac{\rho_{1}^j + \rho_{K-1}^j}{2} - \frac{\Delta t}{2\Delta x} \Big(f(c_{1}^j, \rho_{1}^j) - f(c_{K-1}^j, \rho_{K-1}^j) \Big).
\end{align*}
The probability measure for the accident positions $\mu_{t_j}^{F_{ac},Mac}$ for $B \in \mathcal{B}([a,b])$ is approximated by
\begin{equation*}
\mu_{t_j}^{F_{ac},Mac}(B)=\frac{\sum_{i=1}^K c_i^j \rho_i^j (1-\rho_i^j) \int_B \mathbbm{1}_{[x_{i-\frac{1}{2}},x_{i+\frac{1}{2}})}(x) dx}{\sum_{i=1}^K c_i^j \rho_i^j (1-\rho_i^j) \Delta x}.
\end{equation*}
The discretization of $\mu_{t_j}^{D,Mac}$ can be computed using the piecewise constant density segments from the Lax-Friedrichs scheme
\begin{equation*}
    \mu_{t_j}^{D,Mac}(B) = \frac{\sum_{i=2}^K \varepsilon_{x_{i-\frac{1}{2}}}(B) (\rho_{i}^j - \rho_{i-1}^j)_+ + \varepsilon_{x_{\frac{1}{2}}}(B) (\rho_{1}^j - \rho_{K}^j)_+ }{\sum_{i=2}^K (\rho_{i}^j - \rho_{i-1}^j)_+ + (\rho_{1}^j - \rho_{K}^j)_+}.
\end{equation*}
Apart from the mentioned adaptions one can execute the simulation quite similarly to the microscopic one. For further details we refer to \cite{A}.

\subsection{Exemplary Numerical Comparison}
For a microscopic example we assume a finite road represented by the interval $[-10,10]$ with periodic boundary conditions such that any vehicle leaving at $x=10$ just enter again at $x=-10$. On the road, we set $N=1600$ vehicles, each initially having the same distance to the front vehicle. The length of a vehicle is chosen to be $L(N)=\frac{8}{N}$.

The capacity function of the road is assumed to be $c_{road}(x) = 7 - 2 \cdot \mathbbm{1}_{[0,5]} (x)$. We can interpret this as a general speed limit of 70 kilometers per hour. An exception is made on the part of the interval $[0,5]$ where we allow only for 50 kilometers per hour. The capacity function is smoothed at each discontinuity by a linear interpolation on an interval of length $\varepsilon = 0.02$.  Accidents are incorporated by the accident capacity function $c_{ac}(x)=\prod_{j=1}^{M} 1-c_j \mathbbm{1}_{\big[p_j - \frac{s_j}{2}, p_j + \frac{s_j}{2}\big]} (x)$. Furthermore we assume $\Delta x = \frac{1}{80}$, $\Delta t = \frac{\Delta x}{10}$ and a time horizon of $T=10$. 

For the accidents we set $\lambda^F = \frac{1}{160}$, $\lambda^D = \frac{1}{50}$ and $\lambda^R = 0.25$ and $\beta = 0.5$, such that we allow for both types of accidents with the same likeliness. The distribution for the accident sizes is chosen to be a uniform distribution on $[0.2,1]$ and the capacity reduction is chosen to be distributed according to $ 0.5 \cdot \varepsilon_{0.5} + 0.5 \cdot \varepsilon_{0.99}$. Additionally, for the macroscopic model the flux function is chosen to be $f(\rho) = \rho(1- \rho)$ and the initial density to be $\rho_0(x) = 0.4$.

For one realization, we compare the local densities from the microscopic model with the densities from the macroscopic model on the left and the probability distribution for the accident location on the right of the following figures. For the second one we divide the road into 10 segments of length 2. The left bars represent the microscopic model and the right bars the macroscopic one. The lower parts of the bars show the share of the likeliness coming from accident type 1, whereas the upper parts show the share of accidents of type 2. 

\begin{figure}[htb!]
\centering
\subfloat[][]{\label{fig8a}\includegraphics[scale=0.492]{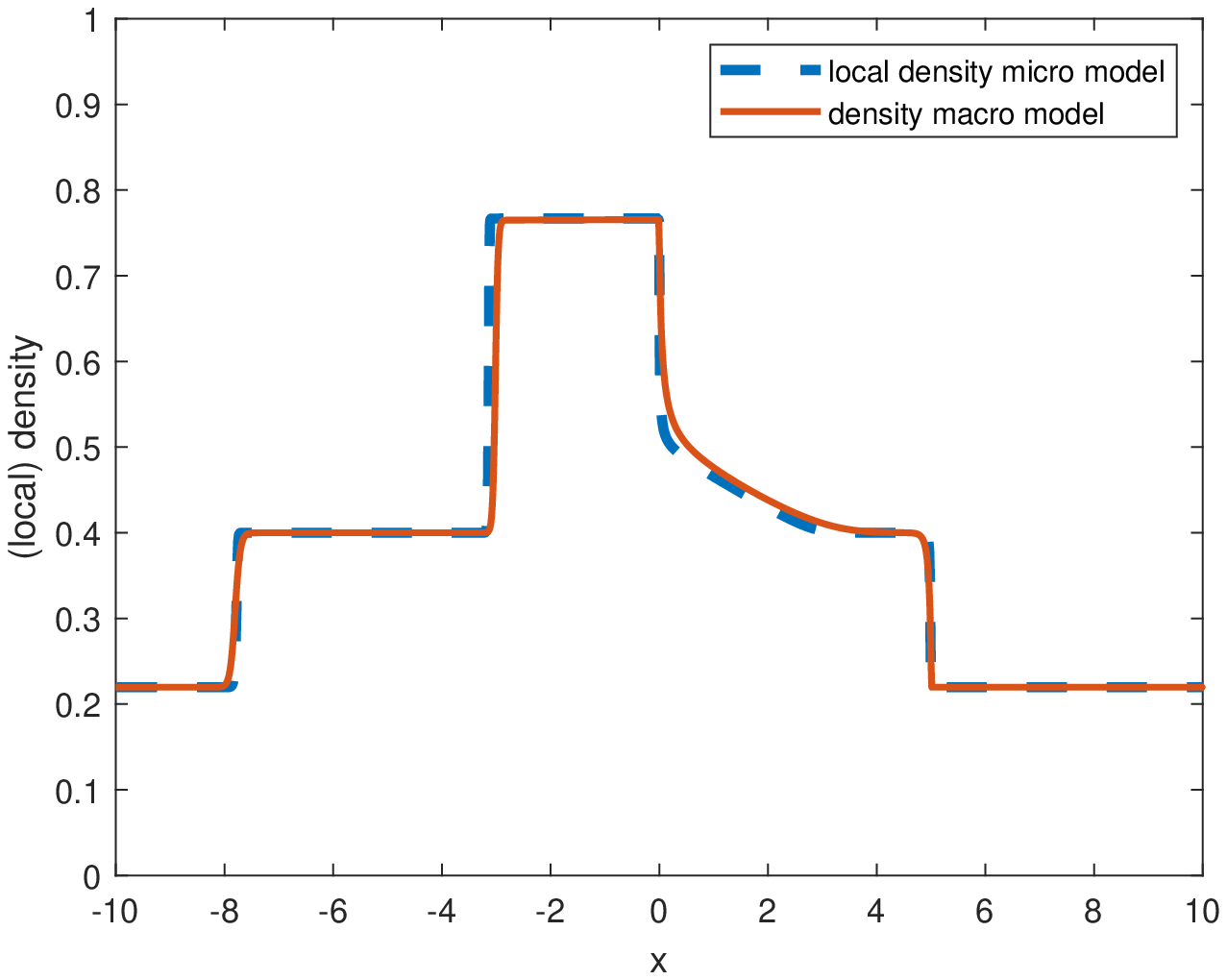} }
\qquad
\subfloat[][]{\label{fig8b}\includegraphics[scale=0.492]{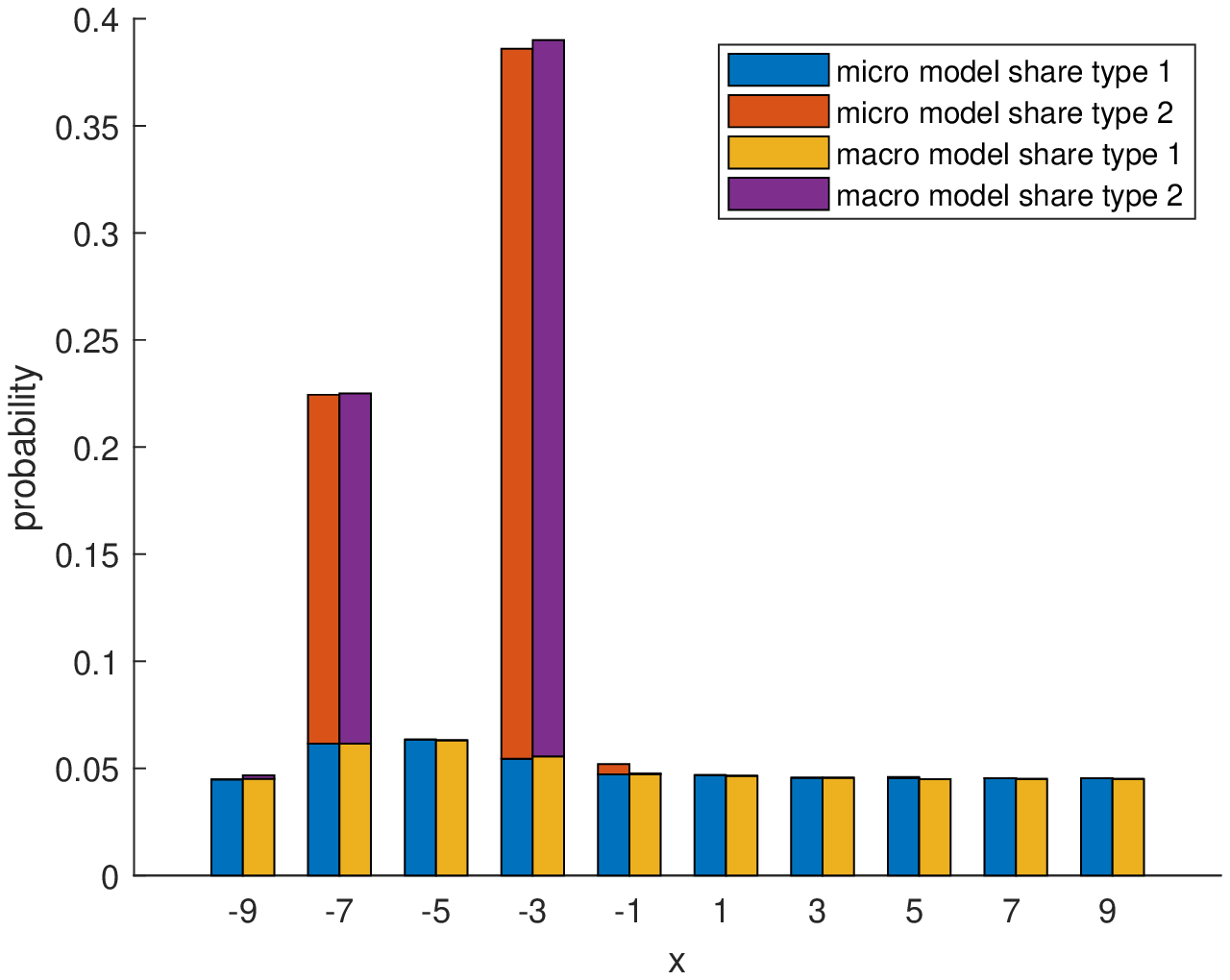}}
\caption{Local traffic density from the microscopic model and density function from the macroscopic model (a) and probability distribution of the accident location for both models (b) at $t=2.72$.}
\end{figure}

The first event is chosen to be at $t=2.72$. At that time Figure $\ref{fig8a}$ shows that both density functions coincide almost everywhere.

The likeliness of an accident of type 1, given by the lower bars in Figure $\ref{fig8b}$, seems to be distributed quite uniformly over the road in both models. This is due to quite stable and moderate traffic densities and velocities. The upper segments in this chart show the probability contribution of accidents of type 2. In contrast to type 1 accidents they are far from being approximately uniformly distributed, since they depend  on the positive increases in the local density functions. There are basically two sections where we observe an increase in the local density function. The first in the area of the interval $[-4,-2]$ right in front of the more restrictive speed limit and the second one being around $x=-8$ where the local densities recover to their initial level after the speed limit constraint. Therefore, we find large upper bars especially in those two sections in Figure $\ref{fig8b}$. Comparing both models, also the accident position probabilities look very similar. Indeed, in our particular example, the first accident is one of type 2 and happens at $x=-3.05$ in the macroscopic model and at $x=-3.13$ in the microscopic model. 

\begin{figure}[htb!]
\centering
\subfloat[][]{\label{fig9a}\includegraphics[scale=0.492]{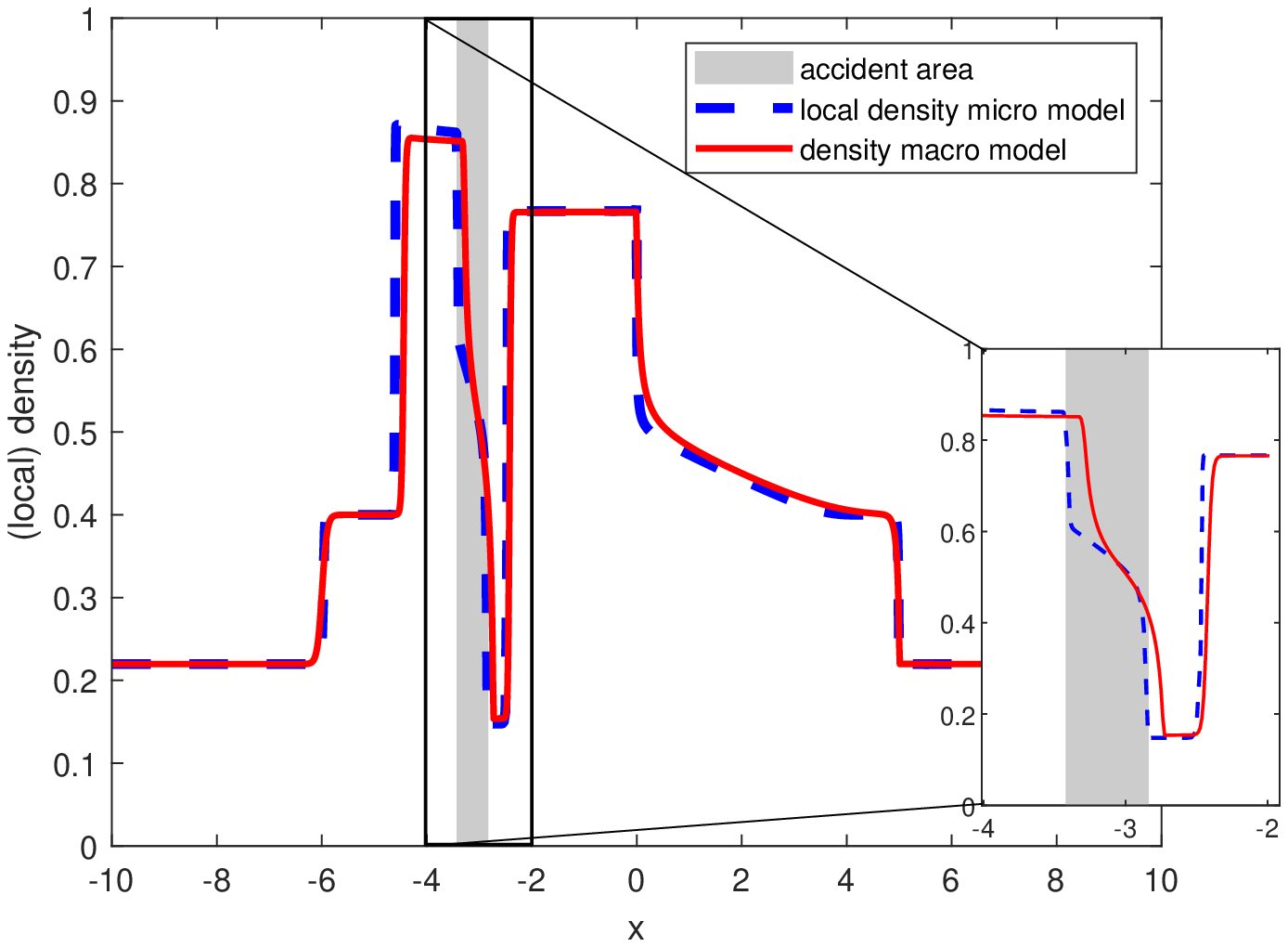}}
\qquad
\subfloat[][]{\label{fig9b}\includegraphics[scale=0.492]{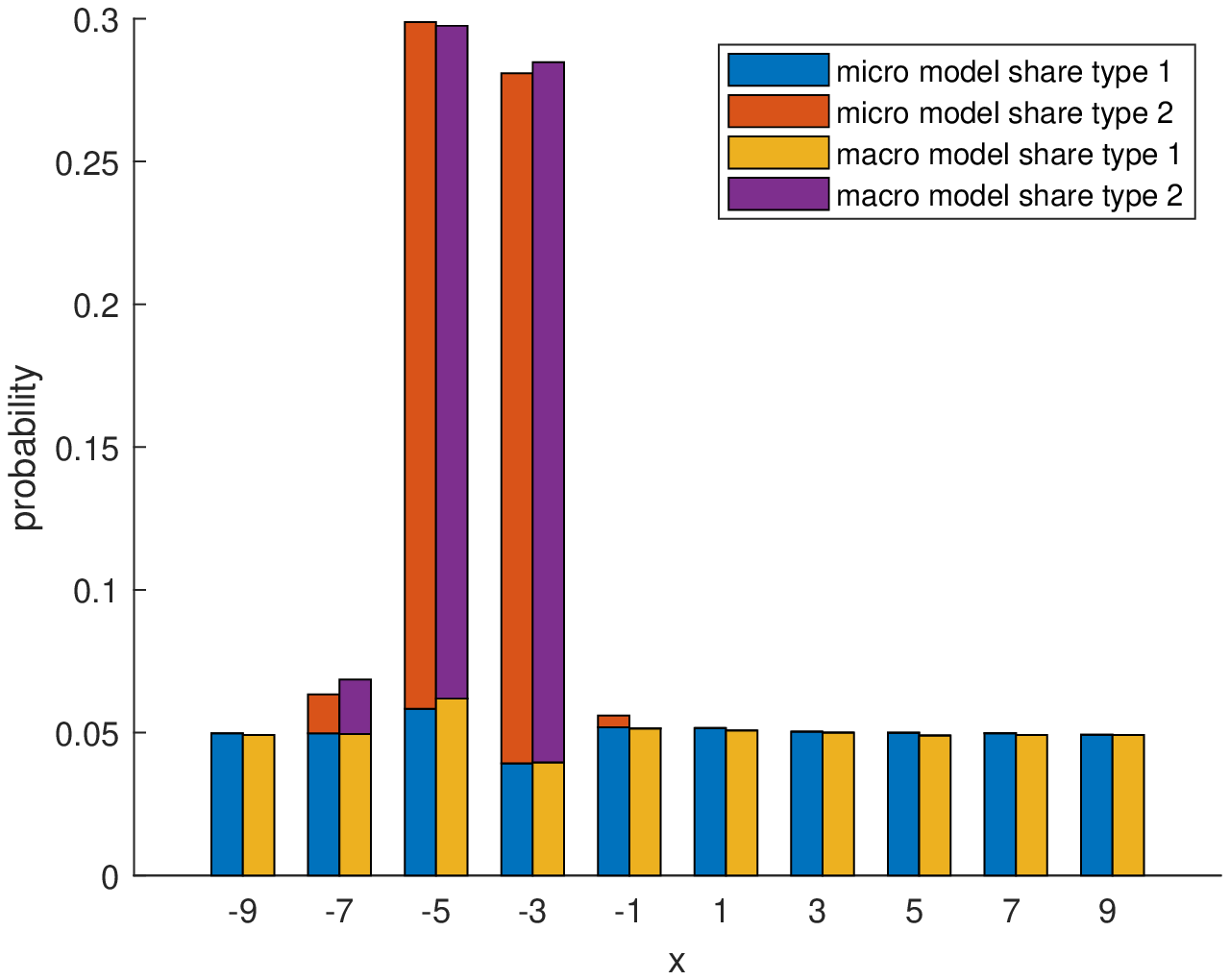}}
\caption{Local traffic density from the microscopic model and density function from the macroscopic model (a) and probability distribution of the accident location for both models (b) at $t=3.4$.}
\label{fig9}
\end{figure}

In both models at time $t=3.4$ the next event happens which is another accident in this example and we can already analyze the consequences of the first accident in Figure $\ref{fig9}$. We observe a steep increase in both densities right in front of the location of the accident in Figure $\ref{fig9a}$ and also a drop right after the accident. The grey area represents the segment of the first accident in the microscopic model. The second increase is due to the more restrictive speed limit on the interval $[0,5]$. Again both densities coincide well.

We observe quite uniform likeliness for an accident of type 1 in Figure $\ref{fig9b}$ apart from the segment around $x=-3$. There, first the density is close to one, which means that vehicles have a velocity of almost 0 and afterwards there is a very restrictive capacity reduction due to the accident. In both cases the flux is very small and leads to a low contribution for high-flux accidents around $x=-3$. Two sections where we observe an increase in the local density function are recognizable around $x=-5$ and $x=-2$. Apart from some minor differences for accidents of type 2 the accident position probabilities are similar in both models. 

In our case an accident of type 1 happens at $x=4.04$ in the microscopic and at $x=4.09$ in the macroscopic model. 

\subsection{Numerical Convergence Analysis}
In the microscopic model we denote $\rho_i^{(N)}(t)$ as the local density of vehicle $i$ in an environment with $N \in \mathbb{N}$ cars on the road at time $t>0$ defined as in $(\ref{odeAcc})$. We consider the piecewise constant local density function defined by
\begin{equation*}
\rho^{(N),Mic}(x,t) = \rho_i^{(N)}(t) \text{,    for  } x \in [x_i(t), x_{i+1}(t)), ~~~i=1,...,N-1.
\end{equation*}
In the periodic environment of our simulation we have to take care about the beginning and the end separately.
\begin{equation*}
\rho^{(N),Mic}(x,t) = \rho_N^{(N)}(t) \text{,    for  } x \in [a, x_{1}(t)) \cup [x_{N}(t),b]. 
\end{equation*} 
In the limit of $N \rightarrow \infty$ we expect that $\rho^{(N),Mic}$ converges to the density function $\rho^{Mac}$ of the macroscopic model, which is the weak solution to the Cauchy problem in $(\ref{makroProblem})$.

One error measure that could be considered is the expected value of the $L^1$-error of the density function from the macroscopic model and a piecewise constant version of the local density functions from the microscopic model, if we increase the number of vehicles in the microscopic model keeping total mass constant.
\begin{align*}
Err_{1} &= \mathbb{E}\bigg[\int_{a}^b | \rho^{(N),Mic}(x,t) - \rho^{Mac}(x,t) | dx \bigg] \\
& := \mathbb{E}\bigg[ \Delta x \sum_{i=0}^{\frac{b-a}{\Delta x}} | \rho^{(N),Mic}(a + i \Delta x,t) - \rho^{Mac}(a + i \Delta x,t) |  \bigg].
\end{align*}
We consider the density functions on an equidistant space grid and then use the rectangular rule to calculate the integral in all segments for the spatial component. A Monte Carlo simulation is performed to approximate the expected value. 

Both models should be related in some stochastic way. More precisely in any decision in which randomness plays a role, we have to make sure to use the same element $\omega \in \Omega$, where $(\Omega, \mathcal{A}, P)$ is the underlying probability space.

We also investigate how the third model introduced in Section $\ref{TildeModel}$ behaves. 
Defining the local density of the microscopic model with macroscopic accidents as $\tilde{\rho}_i^{(N)}(t)$ as in $(\ref{locDensityTilde})$ we can again define a piecewise constant local density function for $t>0$ by
\begin{align*}
\tilde{\rho}^{(N),Mic}(x,t) &= \tilde{\rho}_i^{(N)}(t) \text{,    for  } x \in [\tilde{x}_i(t), \tilde{x}_{i+1}(t)), ~~~i=1,...,N-1. \\
\tilde{\rho}^{(N),Mic}(x,t) &= \tilde{\rho}_N^{(N)}(t) \text{,    for  } x \in [a, \tilde{x}_{1}(t)) \cup [\tilde{x}_{N}(t),b]. 
\end{align*}
Comparing the microscopic model with accidents according to the macroscopic one to the macroscopic one for fixed $t>0$ we are interested in the expected $L^1([a,b])$-error given by
\begin{align*}
\begin{split}
\label{error2}
Err_{2} &= \mathbb{E}\bigg[\int_{a}^b | \tilde{\rho}^{(N),Mic}(x,t) - \rho^{Mac}(x,t) | dx \bigg] \\
& := \mathbb{E}\bigg[ \Delta x \sum_{i=0}^{\frac{b-a}{\Delta x}} | \tilde{\rho}^{(N),Mic}(a + i \Delta x,t) - \rho^{Mac}(a + i \Delta x,t) |  \bigg].
\end{split}
\end{align*}
Instead of using the expected value in the error measure we can also use $L^2(\Omega)$ norm which in the pure microscopic case is discretized by
\begin{equation*}
Err_{3} = \mathbb{E}\bigg[ \bigg(\Delta x \sum_{i=0}^{\frac{b-a}{\Delta x}} | \rho^{(N),Mic}(a + i \Delta x,t) - \rho^{Mac}(a + i \Delta x,t) | \bigg)^2  \bigg]^{\frac{1}{2}}.
\end{equation*}
Replacing the expected value by the $L^2(\Omega)$ norm for the model from Section $\ref{TildeModel}$ we obtain another error measure
\begin{equation*}
    Err_4 = \mathbb{E}\bigg[ \bigg( \Delta x \sum_{i=0}^{\frac{b-a}{\Delta x}} | \tilde{\rho}^{(N),Mic}(a + i \Delta x,t) - \rho^{Mac}(a + i \Delta x,t) | \bigg)^2  \bigg]^\frac{1}{2}.
\end{equation*}
To evaluate the errors numerically we performed a Monte Carlo simulation with 600 runs. If not mentioned differently, we choose the parameters as before.
Set the step size of the macroscopic space grid to $\Delta x = \frac{1}{160}$ and the length of one step in the time grid $\Delta t = \frac{\Delta x}{10}$ such that the CFL condition is satisfied. 

First, we compare the behaviour of the errors for different numbers of vehicles in the microscopic setting in Table $\ref{tab:1}$ choosing $N=\lbrace50,100,200,400,800,1600,3200\rbrace$. Due to the high diffusion of the Lax-Friedrichs approximation one notices numerically that in the limit it is not as precise as the theory suggests. After a decrease in all four error measures we are left with some kind of basic error.  

\begin{table}[htb!]
\centering
\begin{tabular}{r|c|c|c|c|c|c|c}

\hline
 & $N=50$  & $N=100$ & $N=200$ & $N=400$ & $N=800$ & $N=1600$ & $N=3200$ \\
\hline
$Err_1$ & $1.3712$ & $0.7844$ & $0.4185$ & $0.3134$ & $0.3287$ & $0.3914$ & $0.4355$\\
\hline
$Err_2$ & $0.9354$ & $0.5066$ & $0.2560$ & $0.1563$ & $0.1421$ & $0.1589$ & $0.1719$\\
\hline
$Err_3$ & $2.0403$ & $1.4746$ & $0.9149$ & $0.8764$ & $0.8150$ & $0.8357$ & $0.9116$\\
\hline
$Err_4$ & $1.1609$ & $0.6428$ & $0.3121$ & $0.2090$ & $0.2190$ & $0.2435$ & $0.2590$\\
\hline
\end{tabular}
\caption{Error measures for varying $N$ and for $\Delta x = \frac{1}{160}$ using the Lax-Friedrichs scheme.}
\label{tab:1}
\end{table}
To increase the accuracy of the limit investigation we suggest to use a space dependent Godunov scheme for the macroscopic model and define by 
\begin{equation*}
    \rho_i^0 = \frac{1}{\Delta x} \int_{x_{i-\frac{1}{2}}}^{x_{i+\frac{1}{2}}} \rho_0(x) dx
\end{equation*}
the initial cell means for $x_i = a - \frac{\Delta x}{2} + i \Delta x,~i \in \lbrace 1,...,K \rbrace$. For the flux function $f(c,\rho) = c \rho (1-\rho)$ we observe the maximum flux at $\rho^* = \frac{1}{2}$. The Godunov steps are given by
\begin{align*}
    \rho_i^{j+1} =&~ \rho_i^j - \frac{\Delta t}{\Delta x} \Big(\min \big\lbrace f(c_{i+1}^j\max \lbrace \rho_{i+1}^j, \rho^* \rbrace), f(c_{i}^j\min \lbrace \rho_{i}^j, \rho^* \rbrace)  \big\rbrace  \\
    &- \min \big\lbrace f(c_{i}^j\max \lbrace \rho_{i}^j, \rho^* \rbrace), f(c_{i-1}^j\min \lbrace \rho_{i-1}^j, \rho^* \rbrace)  \big\rbrace  \Big),
\end{align*}
where $c_i^j = c(x_i,t_j), ~i  \in \lbrace 2,...,K-1 \rbrace$, $j \in \mathbb{N}_0$. For $\rho_1^{j+1}$ and $\rho_K^{j+1}$ we have to adapt for the periodic boundary conditions
\begin{align*}
    \rho_1^{j+1} =&~ \rho_1^j - \frac{\Delta t}{\Delta x} \Big(\min \big\lbrace f(c_{2}^j\max \lbrace \rho_{2}^j, \rho^* \rbrace), f(c_{1}^j\min \lbrace \rho_{1}^j, \rho^* \rbrace)  \big\rbrace  \\
    &- \min \big\lbrace f(c_{1}^j\max \lbrace \rho_{1}^j, \rho^* \rbrace), f(c_{K}^j\min \lbrace \rho_{K}^j, \rho^* \rbrace)  \big\rbrace  \Big), \\
    \rho_K^{j+1} =&~ \rho_K^j - \frac{\Delta t}{\Delta x} \Big(\min \big\lbrace f(c_{1}^j\max \lbrace \rho_{1}^j, \rho^* \rbrace), f(c_{K}^j\min \lbrace \rho_{K}^j, \rho^* \rbrace)  \big\rbrace  \\
    &- \min \big\lbrace f(c_{K}^j\max \lbrace \rho_{K}^j, \rho^* \rbrace), f(c_{K-1}^j\min \lbrace \rho_{K-1}^j, \rho^* \rbrace)  \big\rbrace  \Big).
\end{align*}
Table $\ref{tab:1God}$ shows the error evolution if we use the Godunov scheme for the limit consideration. In all four cases we observe a decrease in the errors for increasing number of vehicles. The results support the idea of the model introduced in Section $\ref{TildeModel}$ being in between the microscopic and macroscopic one because $Err_2$ and $Err_4$ seem to be sub errors of $Err_1$ and $Err_3$, respectively.
\begin{table}[htb!]
\centering
\begin{tabular}{r|c|c|c|c|c|c|c}

\hline
 & $N=50$  & $N=100$ & $N=200$ & $N=400$ & $N=800$ & $N=1600$ & $N=3200$ \\
\hline
$Err_1$ & $1.5952$ & $1.0549$ & $0.5405$ & $0.3168$ & $0.1836$ & $0.1087$ & $0.0453$\\
\hline
$Err_2$ & $1.0357$ & $0.6265$ & $0.3335$ & $0.1831$ & $0.1013$ & $0.0571$ & $0.0320$\\
\hline
$Err_3$ & $2.3392$ & $1.8600$ & $1.0074$ & $0.7483$ & $0.6217$ & $0.4459$ & $0.1040$\\
\hline
$Err_4$ & $1.3000$ & $0.8149$ & $0.4115$ & $0.2110$ & $0.1139$ & $0.0637$ & $0.0358$\\
\hline
\end{tabular}
\caption{Error measures for varying $N$ and for $\Delta x = \frac{1}{160}$ using the Godunov-scheme.}
\label{tab:1God}
\end{table}

We also consider the evolution of the logarithms of the errors in time for a fixed number of vehicles $N=3200$ in the microscopic model in Figure $\ref{fig10}$. We observe that $Err_2$ and $Err_4$ stay more or less constant after they reached a certain level. The same applies for $Err_1$ whereas $Err_3$ increases more steeply and even shows a slight decrease after some time but still remains on a higher level than the other error measures. These results suggest that the error measures are not expected to show a significant increase if one considers larger time horizons.   
 
\begin{figure}[htb!]
\centering
\subfloat[][]{\label{fig10a}\includegraphics[scale=0.492]{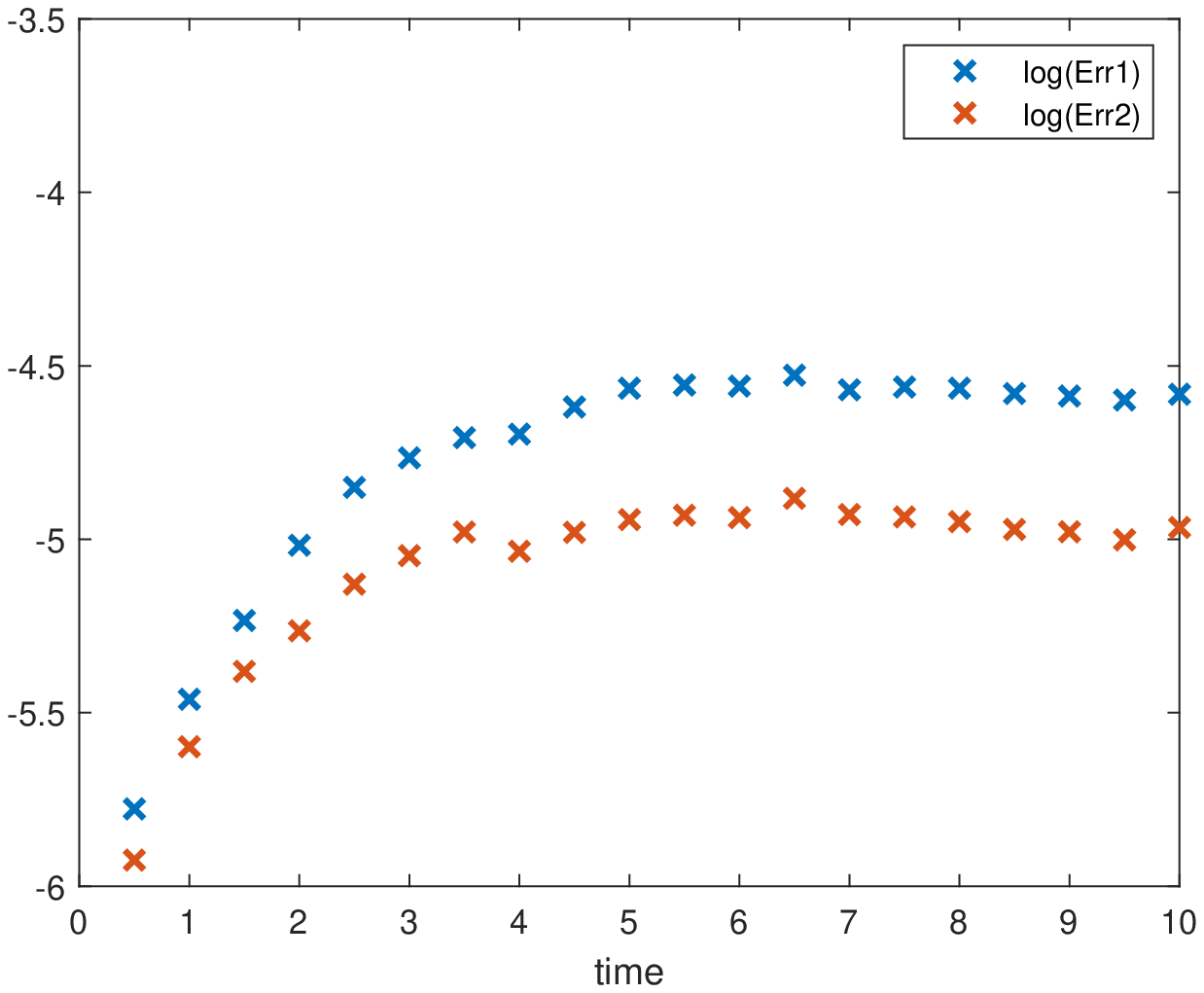}}
\qquad
\subfloat[][]{\label{fig10b}\includegraphics[scale=0.492]{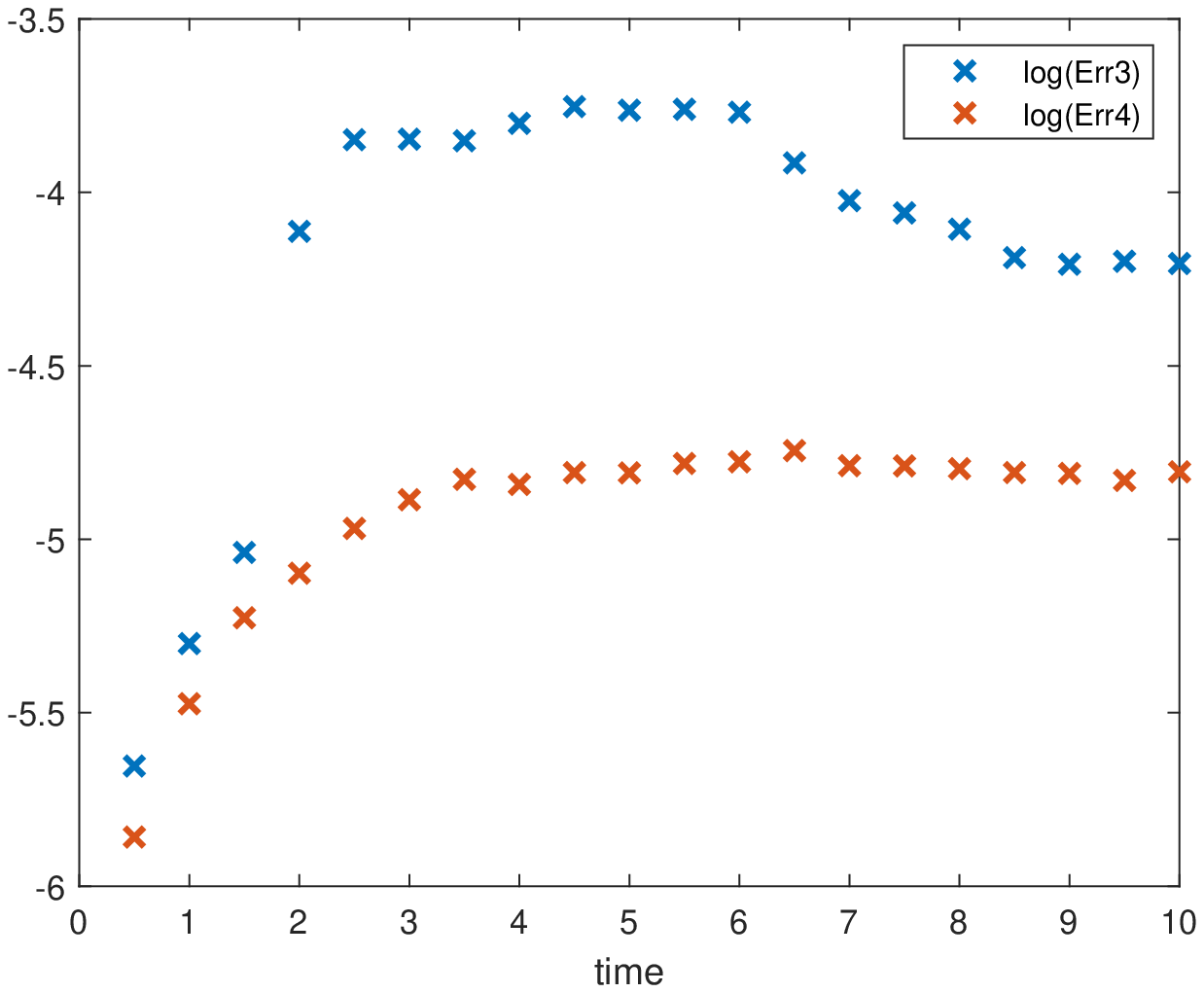}}
\qquad
\caption{Logarithm of $Err_1$ and $Err_2$ in (a) and of $Err_3$ and $Err_4$ in (b) in time for $N=3200$ and $\Delta x = \frac{1}{160}$.}
\label{fig10}
\end{figure}

Varying the step sizes of the space grid in the macroscopic model we obtain the following errors and empirical convergence rates in Table 2 and, respectively. All error measures decrease for decreasing step sizes whereas the convergence rates tend to increase for decreasing step sizes.

\begin{table}[htb!]
\centering

\begin{minipage}{0.47\textwidth}
\begin{tabular}{c|c|c|c|c}
\hline
  &   $Err_1$ & $Err_2$ & $Err_3$ & $Err_4$ \\
\hline
$\Delta x=\frac{1}{40}$ & $0.1112$ & $0.0440$  & $0.6619$ & $0.0483$ \\
\hline
$\Delta x=\frac{1}{80}$ & $0.0678$ & $0.0371$  & $0.2546$ & $0.0479$ \\
\hline
$\Delta x=\frac{1}{160}$ & $0.0453$ & $0.0320$  & $0.1040$ & $0.0358$ \\
\hline
\end{tabular}
\label{tab:2}
\caption{Error measures for different step sizes $\Delta x$ and $N=3200$.}
\end{minipage} \hfill
\begin{minipage}{0.47\textwidth}
\begin{tabular}{c|c|c|c|c}
\hline
  &   $Err_1$ & $Err_2$ & $Err_3$ & $Err_4$ \\
\hline
$\Delta x=\frac{1}{40}$ & $0.661$ & $0.778$  & $0.298$ & $0.807$ \\
\hline
$\Delta x=\frac{1}{80}$ & $0.802$ & $0.821$  & $0.580$ & $0.819$ \\
\hline
$\Delta x=\frac{1}{160}$ & $0.840$ & $0.846$  & $0.653$ & $0.884$ \\
\hline
\end{tabular}
\label{tab:3}
\caption{Empirical convergence rates \\for different step sizes $\Delta x$ and $N=3200$.}
\end{minipage}
\end{table}

\section{Conclusion}
We introduced a microscopic and a macroscopic traffic accident model in which accidents interact bi-directional with the traffic situation. Accidents were incorporated using appropriate probability measures. We were able to prove a micro-macro limit of the models, restricting to a microscopic model being governed by the macroscopic one. The numerical simulations underlined this convergence and indicated that it can be extended for the microscopic model from Section $\ref{sec:BasicModel}$. 

The analytic proof of the convergence of the two microscopic models might be subject to future work. Additionally, a data driven validation of the models and a deeper investigation of the accident occurrences will be addressed in future. 

\section*{Acknowledgement}
This work was supported by the DAAD project "Stochastic dynamics for complex networks and systems" (Project-ID 5744394).

\printbibliography

\end{document}